\newcommand{\ep}[0]{\epsilon}
\newcommand{\ov}[1]{\frac{1}{#1}}
\newcommand{\f}[1]{\mathbb{#1}}
\newcommand{\ar}[1]{\mathbb{R}^{#1}}
\newcommand{\mods}[1]{\mathfrak{M}_{\text{#1}>0}}
\newcommand{\modse}[1]{\mathfrak{M}_{\text{#1}\geq0}}
\newcommand{\zco}[2]{H^{#2}(#1,\f{Z})}
\newcommand{\cpcso}{\f{C}P^2\#\overline{\f{C}P^2}}
\newcommand{\cpcs}{\f{C}P^2\#{\f{C}P^2}}
\numberwithin{equation}{section}
\newtheoremstyle{fancy1}{10pt}{10pt}{\itshape}{12pt}{\textsc\bgroup}{.\egroup}{8pt}{
}
\newtheoremstyle{fancy2}{10pt}{10pt}{}{12pt}{\itshape}{.}{8pt}{ }
\theoremstyle{fancy1}
\newtheorem{lem}[equation]{Lemma}
\newtheorem{prop}[equation]{Proposition}
\newtheorem{thm}[equation]{Theorem}
\newtheorem*{thm*}{Theorem}
\newtheorem{main}{Theorem}
\newtheorem*{main*}{Theorem}
\newtheorem*{cor*}{Corollary}
\newtheorem*{prop*}{Proposition}
\newtheorem*{problem*}{Problem}
\theoremstyle{fancy2}
\newtheorem*{rems*}{Remarks}
\newtheorem*{rem*}{Remark}
\newtheorem*{example*}{Example}
\newcommand{\cref}[1]{Corollary~\ref{#1}}
\newcommand{\lref}[1]{Lemma~\ref{#1}}
\newcommand{\pref}[1]{Proposition~\ref{#1}}
\newcommand{\tref}[1]{Theorem~\ref{#1}}
\newcommand{\sref}[1]{Section~\ref{#1}}
\newcommand{\C}{{\mathbb{C}}}
\newcommand{\R}{{\mathbb{R}}}
\newcommand{\Z}{{\mathbb{Z}}}
\newcommand{\N}{{\mathbb{N}}}
\newcommand{\SO}{\ensuremath{\operatorname{SO}}}
\renewcommand{\O}{\ensuremath{\operatorname{O}}}
\newcommand{\U}{\ensuremath{\operatorname{U}}}
\newcommand{\SU}{\ensuremath{\operatorname{SU}}}
\def\con#1=#2(#3){#1 \equiv #2 \bmod{#3}}
\newcommand{\sign}{\ensuremath{\operatorname{sign}}}
\begin{document}

\title[Moduli space of $S^2\times S^3$ quotients by involutions]{The moduli Space of nonnegatively curved metrics on quotients of $S^2\times S^3$ by involutions}

\author{McFeely Jackson Goodman}
\address{University of California, Berkeley}
\email{mjgoodman@berkeley.edu}

\author{Jonathan Wermelinger}
\address{University of Fribourg (Switzerland)}
\email{jonathan.wermelinger@outlook.com}

\thanks{The first named author was partially supported by National Science Foundation grant  DMS-2001985}

\begin{abstract}
We show that for an orientable non-spin manifold with fundamental group \(\f{Z}_2\) and universal cover \(S^2\times S^3,\) the moduli space of metrics of nonnegative sectional curvature has infinitely many path components.  The representatives of the components are quotients of the standard metric on \(S^3\times S^3\) or metrics on Brieskorn varieties previously constructed using cohomogeneity one actions.  The components are distinguished using the relative $\eta$ invariant of the spin\(^c\) Dirac operator computed by means of a Lefschetz fixed point theorem.

\end{abstract}

\maketitle

\section{Introduction}

Manifolds with nonnegative curvature and finite fundamental group are rare and significant in the study of Riemannian geometry.  Most constructions involve Lie groups. Homogeneous spaces, biquotients, and certain manifolds admitting actions with cohomogeneity one admit metrics with nonnegative curvature.  Such constructions often produce families of metrics, sometimes on a single manifold \(M\).  We then investigate the moduli space 
\[\modse{sec}(M)=\mathfrak{R}_{\text{sec}
\geq 0}(M)/\text{Diff}(M)\]
where \(\mathfrak{R}_{\text{sec}\geq0}(M)\) is the space of nonnegatively curved metrics on \(M\) and the diffeomorphism group Diff\((M)\) acts on \(\mathfrak{R}_{\text{sec}\geq0}(M)\) by pulling back metrics.  We define \(\mods{Ric}(M)\) similarly for metrics of positive Ricci curvature. 

If \(M\) is 2- or 3-dimensional, \(\modse{sec}(M)\) and \(\mods{Ric}(M)\) are connected, as can be established using the uniformization theorem  or the Ricci flow.  Little is known for 4-manifolds, but for \(n\geq 5\), we find examples of \(M^n\) for which the moduli spaces are not only disconnected, but have infinitely many components.   Specifically, Dessai and Gonzalez-Alvaro \cite{DGA21} showed that if \(M^5\) is homotopy equivalent to \(\ar{}P^5,\) them \(\modse{sec}(M)\) and \(\mods{Ric}(M)\) have infinitely many path components.  The first named author \cite{Goo20b} described an infinite class of total spaces \(N^5\) of nontrivial principal \(S^1\) bundles over simply connected 4 manifolds such that \(\mods{Ric}(N^5)\) has infinitely many path components.  Indeed, for every \(k\geq 2\) authors have found examples of \(M^{2k+1}\) with finite fundamental group such that \(\modse{sec}(M^{2k+1})\) and \(\mods{Ric}(M^{2k+1})\) have infinitely many path components, see \cite{DKT18, De17, De20, Goo20a, We21, Wr11}.  In this paper, we identify a new collection of 5-manifolds with that property:

\begin{main}\label{thm}
	Let \(M^5\) be an orientable, non-spin 5-manifold with \(\pi_1(M)=\f{Z}_2\) and  universal cover diffeomorphic to \(S^2\times S^3.\)  Then \(\modse{sec}(M)\) and \(\mods{Ric}(M)\) have infinitely many path components.  
\end{main}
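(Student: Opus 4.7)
The plan is to follow the general framework developed by Kreck--Stolz and adapted by Dessai and collaborators to the non-spin setting via spin\(^c\) Dirac operators. First, I would assemble an infinite family of candidate metrics \(\{g_i\}_{i\in\N}\) of nonnegative sectional curvature on \(M\). Two sources are available, as hinted by the abstract: the round product metric on \(S^3\times S^3\) descends via the \(\f{Z}_2\)-action to give the first representative, while the remaining metrics come from cohomogeneity one Brieskorn-type constructions. Diffeomorphism classification of orientable non-spin 5-manifolds with \(\pi_1=\f{Z}_2\) and universal cover \(S^2\times S^3\) (which is very restrictive — there are only two such manifolds up to diffeomorphism) lets me pull these metrics back to a fixed underlying \(M\).

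Next, I would fix a spin\(^c\) structure \(\mathfrak{s}\) on \(M\) with associated complex line bundle \(L\), chosen so that its pullback to \(\tilde M=S^2\times S^3\) extends over the bounding manifolds used in the construction. I then define a relative invariant
\[
s(M,g,\mathfrak{s})\;=\;\eta\bigl(D_{\mathfrak{s},g}\bigr)+\text{correction terms from APS}\pmod{\Z},
\]
where \(D_{\mathfrak{s},g}\) is the spin\(^c\) Dirac operator. The key analytic point is that on a path \(g_t\) inside \(\modse{sec}(M)\) or \(\mods{Ric}(M)\), the variation of this invariant is controlled by the spectral flow of \(D_{\mathfrak{s},g_t}\); the Lichnerowicz--Weitzenböck formula
\[
D_{\mathfrak{s},g}^{2}=\nabla^{*}\nabla+\tfrac{s_g}{4}+\tfrac{1}{2}c(F_{L})
\]
together with a careful choice of connection on \(L\) with small curvature norm forces \(\ker D_{\mathfrak{s},g_t}=0\) along the path, so \(s(M,g,\mathfrak{s})\) is constant on each path component.

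To evaluate \(s(M,g_i,\mathfrak{s})\), I would realize each \((M,g_i)\) as the boundary (up to an \(S^1\)-bundle factor) of a quotient of a cohomogeneity one manifold \(W_i\) by an involution \(\tau_i\). Applying the Atiyah--Patodi--Singer index theorem on \(W_i\) and then the \(G\)-spin\(^c\) Lefschetz fixed point theorem to \(\tau_i\), the invariant reduces to an explicit sum of local contributions indexed by the components of \(\Fix(\tau_i)\subset W_i\), depending on the Brieskorn exponents defining \(W_i\). Finally, a number-theoretic estimate — typically showing that these contributions have unbounded denominators as \(i\to\infty\) — produces infinitely many distinct values modulo \(\Z\) and hence infinitely many path components.

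The main obstacle I anticipate is twofold: first, ensuring vanishing of \(\ker D_{\mathfrak{s},g_t}\) throughout a path of \emph{only} nonnegatively curved metrics, since the Weitzenböck formula alone provides nonnegativity rather than positivity; the remedy is to make a clever choice of \(\mathfrak{s}\) (possibly different from the obvious one) together with the observation that the scalar curvature of the constructed metrics is strictly positive on an open dense set. Second, the Lefschetz computation itself: the fixed point sets of \(\tau_i\) on the Brieskorn manifolds must be identified and the local rotation numbers and line-bundle weights at each component made explicit — this is the technical heart of the proof and the step that determines whether the final arithmetic separates infinitely many components.
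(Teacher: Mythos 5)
You have the right flavour of argument — relative spin\(^c\) \(\eta\) invariants computed via APS plus a Lefschetz fixed point theorem, evaluated on metrics coming from homogeneous and cohomogeneity one constructions — but there are two genuine gaps.

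First, the diffeomorphism classification you invoke is wrong, and this is not a cosmetic error. There are \emph{ten} diffeomorphism types of orientable non-spin \(M^5\) with \(\pi_1(M)=\Z_2\) and universal cover \(S^2\times S^3\), not two. By Su's classification they split into two families of five: the \(X(q)\), \(q\in\{0,2,4,6,8\}\), on which \(\pi_1\) acts trivially on \(\pi_2\), and the Brieskorn quotients \(Q_0^5(d)\), \(d\in\{0,2,4,6,8\}\), on which \(\pi_1\) acts non-trivially on \(\pi_2\). This dichotomy forces two separate sources of nonnegatively curved metrics: the Brieskorn cohomogeneity one metrics only exist on the \(Q_0^5(d)\) (since \(\pi_1\) of those quotients acts non-trivially on \(\pi_2\)), and hence cannot be pulled back to any \(X(q)\). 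Your plan of taking one \(\Z_2\) quotient of \(S^3\times S^3\) as the first metric and infinitely many Brieskorn metrics as the rest therefore cannot work: for the \(X(q)\) cases you need an independent infinite family, which the paper obtains as \(\Z_2\) quotients of circle quotients \(N_{k,l}=(S^3\times S^3)/S^1\) with varying embedding weights \(k,l\) (not a single quotient of \(S^3\times S^3\) by \(\Z_2\)), using a Pin\(^+\) cobordism invariant of a characteristic submanifold to sort these into the ten diffeomorphism types.

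Second, your treatment of the ``main obstacle'' --- that the Weitzenb\"ock formula only gives nonnegativity along a path of \(\sec\geq0\) metrics --- is not how the argument closes. Choosing a connection on \(L\) with small curvature norm does not rescue the kernel estimate along an arbitrary path of merely nonnegatively curved metrics, and in fact the paper uses a \emph{flat} connection on \(L\), so there is no room to shrink it. The actual mechanism is different: one first establishes (\`a la Dessai--Gonz\'alez-\'Alvaro) that the relative \(\eta\) invariant is constant on path components of \(\mathfrak{R}_{\mathrm{scal}>0}(M)\), and then one uses B\"ohm--Wilking's theorem that the Ricci flow immediately pushes a nonnegatively curved metric (with finite fundamental group) into positive Ricci, hence positive scalar, curvature. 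Applying the flow to an entire path in \(\mathfrak{R}_{\sec\geq0}(M)\) yields a path in \(\mathfrak{R}_{\mathrm{scal}>0}(M)\) with endpoints in the correct components; combined with Ebin's slice theorem to pass to the moduli space, this transfers the path-component separation from \(\mathfrak{R}_{\mathrm{scal}>0}\) down to \(\modse{sec}\) and \(\mods{Ric}\). Without the Ricci flow step your invariant is not known to be locally constant on \(\modse{sec}(M)\), which is the whole point.

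A smaller inaccuracy: the infinitely many distinct \(\eta\) values do not come from ``unbounded denominators'' but from the fact that the fixed-point contributions are nonconstant polynomials in the integer parameters (\(d\) in the Brieskorn case, the weight \(k\) in the circle-quotient case), hence take infinitely many values outright.
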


There are 10 diffeomorphism types of manifolds satisfying the hypotheses of \tref{thm}.  The only closed simply connected 5-manifolds known to admit metrics of nonnegative curvature are \(S^5,\) \(S^2\times S^3,\) \(\SU(3)/\SO(3)\) and the total space of the nontrivial \(S^3\) bundle over \(S^2,\) which we denote by \(S^2\tilde\times S^3.\)  Those 4 diffeomorphism types include all five dimensional homogeneous spaces and biquotients, and spaces admitting effective actions of a Lie group which is non-abelian or has rank \(\geq2\) \cite{ADF96, DV14, GGS11, H10, , Si16}.  The known methods to distinguish components of  \(\modse{sec}(M^5)\) require \(M\) to be an orientable non-spin manifold with a spin universal cover. \(\SU(3)/\SO(3)\) and \(S^2\tilde\times S^3\) are non-spin. Combining \tref{thm} with the results of \cite{DGA21},  we conclude that \(\modse{sec}\) has infinitely many components for all orientable, non-spin \(\f{Z}_2\) quotients of \(S^5\) and \(S^2\times S^3\).  That is, the property holds for all known examples within reach of current methods.    


In \sref{manifolds} we describe three families of nonnegatively curved 5-manifolds, and compute topological invariants.  The first two families, \(X_{k,l}\) and \(\overline X_{k,l},\) are quotients of \(S^3\times S^3\) by \(S^1\), where \(S^1\) is embedded with weights \(k\) and \(l\) into a \(T^2\) action on \(S^3\times S^3\).  The families \(X_{k,l}\) and \(\overline{X}_{k,l}\) differ by the \(T^2\) action used.  The manifolds are also total spaces of principal \(S^1\) bundles over \(\f{C}P^2\#\f{C}P^2\) or  \(\f{C}P^2\#\overline{\f{C}P}^2;\) it was shown in \cite{Goo20b} that for those manifolds, \(\mods{Ric}\) has infinitely many components.

The third family, \(Q^5_0(d),\) are \(\f{Z}_2\) quotients of Brieskorn variteties, namely the intersection of \(S^7\) with the variety in \(\f{C}^4\) defined by \(z_0^d+z_1^2+z_2^2+z_3^2=0\) for \(d\) even.  They can be equipped with a metric of nonnegative curvature using a cohomogeneity one action by \(S^1\times \O(3)\) as in \cite{GVWZ06}.   

In \sref{diffeo}, we sort those families into diffeomorphism types using Su's classification \cite{Su12}  along with a computation of the primary diffeomorphism invariant for the families \(X_{k,l}\) and \(\overline{X}_{k,l}\) from \cite{Goo20b}.  In particular, we see that any manifold satisfying the hypotheses of \tref{thm} is diffeomorphic to infinitely many members of one of the three families.  To prove \tref{thm} we pull back nonnegatively curved metrics using those diffeomorphisms and show that the metrics obtained represent infinitely many components of \(\modse{sec}.\)  Furthermore, we use the Ricci flow to show that each metric has a neighbor with positive Ricci curvature, and those neighbors represent infinitely many components of \(\mods{Ric}\).  

We use the relative \(\eta\) invariant of a spin\(^c\) Dirac operator to distinguish connected components of \(\modse{sec}\) and \(\mods{Ric}.\)  The nonnegatively curved metric on each manifold \(M\) described in \sref{manifolds} can be lifted to a \(\f{Z}_2\) invariant metric on the universal cover \(\tilde M\) which can in turn be extended to a \(\f{Z}_2\) invariant metric of positive scalar curvature on a 6-manifold \(W\) such that \(\partial W=\tilde M.\)  The relative \(\eta\) invariant of \(M\) can then be computed using a Lefschetz fixed point theorem for manifolds with boundary, developed by Donnely in \cite{Do78} based on the index theorem for manifolds with boundary of \cite{APSI75}.  The invariant is given in terms of the index of a Dirac operator, which vanishes because of the positive scalar curvature, and an integral over the fixed point set of the \(\f{Z}_2\)  action on \(W.\)  
\section{Preliminaries}

\subsection{Diffeomorphism Classification}\label{subsec:diffeomorphismclassification}
 Our result depends on a diffeomorphism classification of quotients of \(S^2\times S^3\) due to Su.  The key diffeomorphism invariant is the cobordism class of a characteristic submanifold.   Given a manifold \(X^n\) with \(\pi_1(X^n)=\f{Z}_2,\) let \(f:X^n\to\ar{}P^N\) be a map which is an isomorphim on \(\pi_1\) and transverse to \(\ar{}P^{N-1}\subset \ar{}P^{N}.\)  Then \(P=f^{-1}(\ar{}P^{N-1})\)  is a characteristic submanifold of \(X^n.\)  If \(X^n\) is non-spin but has spin universal cover, \(P\) admits two Pin\(^+\) structures which represent inverse classes in \(\Omega_{n-1}^{\text{Pin}^+}\).  The class \([P]\in\Omega_{n-1}^{\text{Pin}^+}/\pm\) is a diffeomorphism invariant.  \(\Omega_4^{\text{Pin}^+}\) is isomorphic to \(\f{Z}_{16},\) generated by \([\ar{}P^4],\) and we use the identification \(\Omega_4^{\text{Pin}^+}/\pm\cong\{0,1,...,8\}\).  See \cite{GT98, KT90, LdM71, Su12} for details.  

   The classification in \cite{Su12}  uses two families of model manifolds.  The family \(X(q),\) \(q\in\{0,2,4,6,8\}\) are constructed from pairs of homotopy \(\ar{}P^5\)'s by removing tubular neighborhoods of generators of the fundamental group and gluing along the boundaries.  The second family  \(Q_0^5(d)\), \(d\in\f{N}_0,\)  are quotients of Brieskorn varieties and will be described in \sref{nontrivial}.

\begin{thm}\cite{Su12}\label{su}

		Let $M^5$ be a smooth, orientable, non-spin 5-manifold with $\pi_1(M)\cong \Z_2$ and universal cover diffeomorphic to $S^2\times S^3$. Let $P\subset M$ be a characteristic submanifold.
		
		\begin{enumerate}
			\item If $\pi_1(M)$ acts trivially on $\pi_2(M)$, then $M$ is diffeomorphic to $X(q)$ for  $q\in\{0,2,4,6,8\}$ such that $[P]=q\in \Omega^{Pin^+}_4/\pm$. 
			\item If $\pi_1(M)$ acts non-trivially on $\pi_2(M)$, then $M$ is diffeomorphic to  $Q^5_0(d)$ for  $d\in \{0,2,4,6,8\}$ such that $[P]=d\in\Omega^{Pin^+}_4/\pm$.
		\end{enumerate}

\end{thm}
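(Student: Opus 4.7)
The plan is to present \(M\) as a free \(\f{Z}_2\)-quotient of its universal cover \(S^2\times S^3\) and to classify such involutions via modified surgery. The two cases of the theorem correspond to whether the deck transformation acts trivially on \(\pi_2(S^2\times S^3)\cong\f{Z}\) or by \(-1\); since the normal \(1\)-type of \(M\) is controlled by \(\pi_1(M)\) together with this action and by \((w_1,w_2)\), both pieces of data are natural invariants.

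For existence, I would analyze the models directly. In case (1), \(X(q)\) is assembled by gluing two copies of a homotopy \(\ar{}P^5\) with an equivariant disk removed along the resulting \(S^1\times D^4\) boundaries; a characteristic submanifold of the result is Pin\(^+\)-cobordant to a connected sum of the characteristic submanifolds of the two pieces, and by choosing the pieces appropriately every class \(q\in\{0,2,4,6,8\}\subset\Omega_4^{\text{Pin}^+}/\pm\) is realized. In case (2), the Brieskorn quotients \(Q^5_0(d)\) of \sref{nontrivial} have the nontrivial \(\pi_1\)-action on \(\pi_2\) built into their construction, and a direct computation expresses \([P]\) in terms of \(d\) in a way that again hits every even class.

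For uniqueness within a case, suppose \(M\) and a candidate model \(M'\) share the action type and satisfy \([P]=[P']\). I would fix a Pin\(^+\)-bordism \(V^5\) between \(P\) and \(P'\) (well-defined up to orientation reversal, the ambiguity absorbed by \(\pm\)), thicken it to a bordism between equivariant tubular neighbourhoods, and complete it to a normal cobordism \(W^6\) between \(M\) and \(M'\) using the canonical cut-and-paste decomposition of \(M\) along \(P\). Kreck's modified surgery on \(W\) below the middle dimension, together with the \(s\)-cobordism theorem and the vanishing of \(\mathrm{Wh}(\f{Z}_2)\), then produces the desired diffeomorphism.

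The restriction of \([P]\) to even values comes from a characteristic number calculation: the reduction \([P]\bmod 2\in\f{Z}_2\subset\f{Z}_{16}/\pm\) is detected by an appropriate Stiefel-Whitney number of \(P\), and the orientability of \(M\) together with \(P\) being Poincar\'e dual to the generator of \(H^1(M;\f{Z}_2)\) classifying the universal cover forces this number to vanish. The hardest step will be controlling the middle-dimensional surgery obstruction in \(L_5(\f{Z}[\f{Z}_2])\) during the uniqueness argument; ruling out exotic \(L\)-theory contributions requires careful analysis of the normal invariants adapted to the \(\pi_1\)-action, and in case (2) it will be crucial that the Brieskorn models already exhaust the expected list.
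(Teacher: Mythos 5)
The paper does not prove this theorem; it is attributed to \cite{Su12} and used as a black box. There is therefore no in-paper proof to compare your proposal against, so I will evaluate the sketch on its own merits.

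Your outline is broadly aligned with the actual strategy in Su's paper: the classification of free involutions on \(S^2\times S^3\) there is indeed carried out via Kreck's modified surgery, determining the normal \(1\)-type from \(\pi_1\), the \(\pi_1\)-action on \(\pi_2\), and the Stiefel--Whitney data, and then realizing the normal bordism classes by explicit models, with \([P]\in\Omega_4^{\mathrm{Pin}^+}/\pm\) serving as the final distinguishing invariant. A few specific points deserve attention. First, the step you flag as ``hardest'' --- the surgery obstruction in \(L_5(\Z[\Z_2])\) --- is less of an obstacle than you suggest: for orientable \(M\) the deck transformation is orientation preserving, so the relevant Wall group is \(L_5(\Z[\Z_2])\) with trivial orientation character, which vanishes; the genuine subtlety in the modified surgery approach sits instead in the \(\ell\)-monoid obstruction of Kreck's theory, and a correct write-up must phrase it in those terms rather than in classical \(L\)-theory. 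Second, your explanation for why \([P]\) lands only on even classes is vague: asserting that ``an appropriate Stiefel--Whitney number'' vanishes is not an argument, and in fact the constraint to \(\{0,2,4,6,8\}\) is precisely what distinguishes quotients whose universal cover is \(S^2\times S^3\) from those whose universal cover is a different \(S^3\)-bundle over \(S^2\) or a different \(5\)-manifold; making this precise requires the detailed computation of characteristic submanifolds of the model manifolds, as carried out in \cite{Su12} and the references therein (e.g.\ \cite{GT98, KT90}). Third, the claim that a Pin\(^+\)-bordism between \(P\) and \(P'\) can be ``thickened'' and ``completed'' to a normal cobordism between \(M\) and \(M'\) is the crux of the uniqueness argument and needs justification: one must show that the bordism can be chosen compatibly with the normal \(1\)-smoothings, not merely with the Pin\(^+\) structures. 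As a proof \emph{plan} this is a reasonable sketch of the right strategy, but the gaps above are exactly the points where a careful proof would have to do real work.
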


\subsection{Relative \(\eta\) Invariants}

The relative \(\eta\) invariant allows us to distinguish path components in \(\modse{sec}\) and \(\mods{Ric}\). Specifically, the relative \(\eta\) invariant is constant on each path component of the space of positive scalar curvature metrics on a closed manifold. Here we define the relative \(\eta\) invariant and present some of its properties. See \cite{We21, DGA21, APSII75, Do78} for more details.



Let $Q^{2n-1}$ be a closed $spin^c$ manifold with finite fundamental group, Riemannian metric $g$ and a flat connection on the principal \(\U(1)\) bundle associated to the spin$^c$ structure.  Let $M$ be the universal cover of \(Q\) with the lifted spin\(^c\) structure.  Let $\alpha:\pi_1(Q)\rightarrow U(k)$ be a unitary representation and $E_\alpha:=M\times_{\alpha}\mathbb{C}^k$ the corresponding flat vector bundle over $Q$.  Let \(D_Q\) be the spin\(^c\) Dirac operator on \(Q\), \(D_Q\otimes E_\alpha\) the twisted spin\(^c\) Dirac operator, and \(D_M\) the spin\(^c\) Dirac operator on \(M\).  For a self adjoint elliptic operator \(D\) with spectrum \(\{\lambda\}\), counted with multiplicity,   $\eta(D):=\eta(0)$ is the analytic continuation to \(z=0\) of the function \(\eta:\f{C}\to\f{C}\),  $$\eta(z):=\sum_{\lambda\neq 0} \frac{\text{sign}(\lambda)}{|\lambda|^z}$$ which is analytic when the real part of \(z\) is large.

The \emph{relative $\eta$ invariant} of $Q$ is defined as
\begin{equation}\label{def:relative-eta-invariant}
    \eta_\alpha(Q,g):=\eta(D_Q\otimes E_\alpha)-k\cdot\eta(D_Q).
\end{equation} The following result is the key to distinguish path components in the moduli spaces. See \cite[Proposition 3.3]{DGA21} for the proof.

\begin{prop}\label{prop:etainvsame}
 Let $Q^{2n-1}$ be a closed connected $spin^c$ manifold, $\alpha:\pi_1(Q)\rightarrow U(k)$ a unitary representation and $E_\alpha$ the associated flat complex vector bundle. Suppose that the principal $U(1)$-bundle associated to the $spin^c$ structure on $Q$ is given a flat connection. Let $g_0$ and $g_1$ be two metrics of $scal>0$ which lie in the same path component in $\mathfrak{R}_{scal>0}(Q)$. Then ${\eta}_\alpha(Q,g_0)={\eta}_\alpha(Q,g_1)$.
\end{prop}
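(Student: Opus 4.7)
The plan is to reduce the claim to a vanishing statement via the Atiyah--Patodi--Singer index theorem applied to a positive scalar curvature cobordism between $(Q,g_0)$ and $(Q,g_1)$.

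Given a path $\{g_t\}_{t\in[0,1]}\subset\mathfrak{R}_{scal>0}(Q)$ joining $g_0$ and $g_1$, the first step is to construct a Riemannian metric $\bar g$ on the cylinder $W=Q\times[0,1]$ with $\bar g|_{Q\times\{i\}}=g_i$, of product form $dt^2+g_i$ near each boundary component, and satisfying $scal_{\bar g}>0$. Slowing down the path parameter near $t=0$ and $t=1$ produces the product form and makes the contribution of $\partial_t g_t$ to $scal_{\bar g}$ arbitrarily small, while the positive, continuous function $scal_{g_t}$ stays bounded below on the compact interval. The spin$^c$ structure on $Q$, the flat $\U(1)$-connection on its determinant line, and the flat bundle $E_\alpha$ all pull back to $W$.

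Next, I would apply the APS index theorem to both $D_W$ and its twist $D_W\otimes E_\alpha$ with APS boundary conditions. Writing $\partial W=Q_1\sqcup(-Q_0)$ and subtracting $k$ times the untwisted identity from the twisted one yields
\[
\ind(D_W\otimes E_\alpha)-k\,\ind(D_W)=\int_W \hat A(W)\wedge e^{c_1(L)/2}\wedge(\mathrm{ch}(E_\alpha)-k)-\tfrac{1}{2}\bigl(\eta_\alpha(Q,g_1)-\eta_\alpha(Q,g_0)\bigr).
\]
Flatness of $E_\alpha$ implies that its Chern character form, computed from the flat connection, is identically $k$, so the interior integral vanishes. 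To finish, I would invoke the Lichnerowicz--Weitzenb\"ock identity: flatness of both the $\U(1)$ connection and of $E_\alpha$ removes all curvature contributions from the twisting, leaving $(D_W\otimes E_\alpha)^2=\nabla^*\nabla+scal_{\bar g}/4$ on $W$ and similarly for $D_W$. Positivity of $scal_{\bar g}$, together with invertibility of the boundary Dirac operators (again by Lichnerowicz applied to each $g_i$), forces both APS indices to vanish. The displayed equation then collapses to $\eta_\alpha(Q,g_1)=\eta_\alpha(Q,g_0)$.

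The main obstacle is the rigorous vanishing of the APS indices from positive scalar curvature alone. On a closed manifold Lichnerowicz gives this at once, but on $W$ one must work with the APS boundary projection and argue that $L^2$-kernel and cokernel elements restrict on the boundary to the kernel of the boundary Dirac operator, which is trivial by positive scalar curvature on $Q$. This is the technical heart of the argument and is handled by the standard machinery of \cite{APSII75, Do78}; once it is in place, the remaining steps are the direct calculation above.
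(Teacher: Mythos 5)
Your argument is correct and follows the same route as the proof the paper cites for this result (\cite[Proposition 3.3]{DGA21}): construct a positive scalar curvature metric on $Q\times[0,1]$ of product form near the ends, apply the APS index theorem to $D_W\otimes E_\alpha$ and $k\,D_W$, note that the interior integrand vanishes by flatness of $E_\alpha$ and of the determinant line connection, and kill both APS indices and the boundary kernels via the Lichnerowicz formula. You also correctly identify the boundary-term bookkeeping in the Lichnerowicz integration by parts under the APS global boundary condition as the technical heart of the vanishing step.
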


By \cite[(2$\cdot$14)]{APSII75}, the twisted $\eta$ invariant on $Q$ can be computed in terms of the equivariant $\eta$ invariant of $M$
\begin{equation}\label{twistedeta}
\eta(D_Q\otimes E_\alpha)=\frac{1}{|\pi_1(Q)|}\sum_{g\in \pi_1(Q)}\eta_g(D_M)\cdot\chi_\alpha(g).
\end{equation} Here $\chi_\alpha$ is the character of $\alpha$ and the \emph{equivariant eta-invariant} is defined as $\eta_g(D_M):=\eta_g(0),$ the analytic continuation to \(z=0\) of $$\eta_g(z):=\sum_{\lambda\neq 0}\frac{\text{sign}(\lambda)tr(g^\#_\lambda)}{|\lambda|^z},$$ where the eigenvalues are counted without multiplicity, and $g^\#_\lambda$ is the map induced by $g$ on the eigenspace of $D_M$ corresponding to eigenvalue \(\lambda\).

We now discuss how to compute the relative $\eta$ invariant in the context of this paper.  Let $W^{2n}$ be a compact $spin^c$ manifold with simply connected boundary $M^{2n-1}=\partial W$. Assume that $W$ is of product form near the boundary, that is, there is a an neighborhood of $M$ which is isometric to $I\times M$, for an interval I.  Equip the principal \(\U(1)\) bundle associated to the spin\(^c\) structure with a connection  which near the boundary is constant in the direction determined by the interval \(I.\) The complex spinor bundle associated to the $spin^c$ structure on $W$ decomposes as $S=S^+\oplus S^-$. Let $D^+_W:\Gamma(S^+)\rightarrow \Gamma(S^-)$ be the \(spin^c\) Dirac operator on $W$ and $D_M$ the  $spin^c$ Dirac operator on $M$. 


Next, let $\tau$ be an involution on $W$ which is fixed point free on $M$. Define $Q^{2n-1}:=M/\tau$ and let $\alpha:\Z_2\rightarrow U(1)$ be the nontrivial unitary representation of the fundamental group of $Q$.  By \eqref{twistedeta}
\begin{equation*}
\eta(D_Q\otimes E_\alpha)=\frac{1}{2}(\eta(D_M)-\eta_\tau(D_M))
\end{equation*} whereas for the trivial representation we have 
\begin{equation*}
\eta(D_Q)=\frac{1}{2}(\eta(D_M)+\eta_\tau(D_M)).
\end{equation*}
The relative $\eta$ invariant is then given by \(\eta_\alpha(Q,g)=-\eta_\tau(D_M).\)

 By Donnelly's equivariant index theory \cite{Do78}
$$\text{index}(D_W,\tau)=\sum_{N\subset W^\tau}a(N)-\frac{h_\tau(D_M)+\eta_\tau(D_M)}{2},$$ where $\text{index}(D_W,\tau):=tr(\tau|_{ker D_W})-tr(\tau|_{ker D^*_W})$, $N$ is a component of the fixed point set \(W^\tau\), $h_\tau(D_M):=tr(\tau|_{ker D_M})$, and $a(N)$ is a so-called \emph{local contribution}, the integral over \(N\) of a differential form depending on the metric on \(N\) and the action of \(\tau\) on the normal bundle to \(N.\)  \footnote{For the precise definition of the local contribution $a(N)$ see \cite{Do78}.  When \(N\) is a closed manifold, \(a(N)\) is the integrand in the equivariant index theorem for closed manifolds; see \cite[\S3]{ASIII68} and  \cite[\S III.14]{LM89}. }

If we assume that the metric on $W$ has scal $\geq 0$ everywhere, the connection on the principal \(\U(1)\) bundle associated to the spin\(^c\) structure is flat, and the metrics on $M$ and $Q$ have scal $>0$, then by the usual Lichnerowicz argument (see \cite{Li63}, \cite{LM89}, and \cite{APSII75}) it follows that $\text{index}(D_W,\tau)$ and $h_\tau(D_M)$ vanish. The relative $\eta$ invariant is then given by
\begin{equation}\label{fixedpointformula}
   \eta_\alpha(Q,g)=-\eta_\tau(D_M)=-2\sum_{N\subset W^\tau}a(N). 
\end{equation}
The computation of the local contribution depends on the situation and will be given in the proofs of \lref{Qeta} and \lref{etavalues}.

\section{Families of nonnegatively curved manifolds}\label{manifolds}

In this section, we describe three families of nonnegatively curved five manifolds satisfying the hypotheses of \tref{thm}.   We calculate topological invariants of those manifolds, and the bundles used to describe them, as required for the diffeomorphism classification in \sref{diffeo} and the computation of the relative \(\eta\) invariants in \sref{eta}.

\subsection{The fundamental group acts trivially on higher homotopy groups}

Given relatively prime integers \(k,l\), define the homomorphism
\[i_{k,l}:U(1)\to T^2,\quad z\mapsto (z^k,z^l).\]
We describe two free actions of \(T^2\) on \(S^3\times S^3\) by isometries of the product of round metrics.  To describe the actions we consider \(S^3\times S^3\subset \f{C}^2\times\f{C}^2.\)

\subsection*{Case I}
Let \(T^2\) act on \(S^3\times S^3\) by means of the homomorphism
\begin{equation}\label{action1}T^2\to T^2\times T^2\subset U(2)\times U(2)\end{equation}
\[(z,w)\mapsto((z,zw),(w,w)).\]
The action is free.  Define \(B^4=S^3\times S^3/T^2.\)  For integers \(k,l\), where \(k\) is odd, \(l\) is even, and gcd\((k,l)=1\), define \(N_{k,l}^5=S^3\times S^3/i_{k,l}(U(1))\). One can use the results of \cite{DV14} to identify \(B^4\cong\cpcso\) and \(N^5_{k,l}\cong S^2\times S^3.\)  \(N^5_{k,l}\) admits a free action by \(T^2/i_{k,l}(U(1))\cong U(1)\) with quotient \(B^4\).  Let \(c_{k,l}\in\zco{B^4}{2}\) be the first Chern class of the principal \(U(1)\) bundle \(N^5_{k,l}\to B^4.\)  Let \(X_{k,l}^5=N_{k,l}^5/\f{Z}_2\) be the quotient of \(N_{k,l}\) by the action of \(\f{Z}_2=\{\pm1\}\subset U(1).\) \(X_{k,l}^5\) admits a free action by \(U(1)/\f{Z}_2\cong U(1)\) with quotient \(B^4\).  The first Chern class of that bundle is \(2c_{k,l}.\)  By O'Neil's formula, \(N_{k,l}^5\) admits an \(U(1)\) invariant Riemmanian metric of nonnegative sectional and positive scalar curvature which descends to such a metric on \(X_{k,l}^5\). 

\subsection*{Case II} Let \(T^2\) act on \(S^3\times S^3\) by means of the homomorphism
\[T^2\to T^2\times T^2\subset U(2)\times U(2)\]
\[(z,w)\mapsto((z,zw),(w,z^2w)).\]  Define \(\overline{B}^4=S^3\times S^3/T^2,\) \(\overline{N}_{k,l}^5=S^3\times S^3/i_{k,l}(U(1)),\) and \(\overline{X}_{k,l}^5=\overline{N}_{k,l}^5/\f{Z}_2\) exactly as in Case I, but with this distinct \(T^2\) action. One can use the results of \cite{DV14, To02} to identify \(\overline{B}^4\cong\cpcs\) and \(\overline{N}^5_{k,l}\cong S^2\times S^3.\)   Let \(\overline{c}_{k,l}\in\zco{\overline{B}^4}{2}\) be the first Chern class of the principal \(U(1)\) bundle \(\overline{N}^5_{k,l}\to \overline{B}^4.\) Again we have a principal \(U(1)\) bundle \(\overline X_{k,l}^5\to \overline B^4\)with first Chern class  \(2\overline{c}_{k,l}.\)  By O'Neil's formula, \(\overline{N}_{k,l}^5\) admits an \(S^1\) invariant Riemmanian metric of nonnegative sectional and positive scalar curvature which descends to a such a metric on \(\overline{X}_{k,l}^5\). 

We compute the cohomology rings of \(B^4\) and \(\overline{B}^4\) in terms of generators with which it will be straighforward to identify \(c_{k,l}\) and \(\overline{c}_{k,l}.\)  Let \(p_1\) and \(w_2\) denote the first Pontryagin and second Stiefel-Whitney classes respectively.  

\begin{lem}\label{invariants}

\

	\begin{enumerate}
\item 	\(\zco{B^4}{*}=\f{Z}[u,v]/(u^2+uv,v^2)\) \item \(p_1(TB^4)=0,\) \(w_2(TB^4)=v\text{\normalfont\ mod } 2\), and \(c_{k,l}=-lu+kv.\) \item \(w_2(N^5_{k,l})=0\) and \(w_2(X^5_{k,l})\neq0.\)
		\item 	\(\zco{\overline{B}^4}{*}=\f{Z}[\bar u,\bar v]/(\bar u^2+\bar u\bar v,\bar v^2+2\bar u\bar v)\) 
		\item \(p_1(T\overline B^4)=6\bar u^2,\) \(w_2(T\overline B^4)=\bar v\text{\normalfont\ mod } 2\), and \(c_{k,l}=-l\bar u+k\bar v.\) \item \(w_2(\overline N^5_{k,l})=0\) and \(w_2(\overline X^5_{k,l})\neq0.\)
	\end{enumerate}	
\end{lem}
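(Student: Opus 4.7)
The plan is to extract all of the stated invariants from the principal $T^2$-bundle $\pi: S^3 \times S^3 \to B^4$ (and its analogue over $\overline B^4$), using its Serre spectral sequence for cohomology, the Hirzebruch signature theorem for $p_1$, and the Wu formula together with the Gysin sequence for the Stiefel--Whitney classes. For parts (1) and (4), I would classify the $T^2$-bundle by a map $f: B^4 \to BT^2 = \CP^\infty \times \CP^\infty$ and set $u := f^* x_1$, $v := f^* x_2$ for the two polynomial generators of $H^*(BT^2)$; equivalently, $u$ and $v$ are the $d_2$-transgressions of the generators of $H^1(T^2)$ in the Serre spectral sequence of $\pi$. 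Since $H^1(S^3\times S^3) = H^2(S^3\times S^3) = 0$, this transgression is injective and its image generates $H^2(B^4)$, so $u, v$ freely generate $H^2(B^4)$. Combined with the identifications $B^4 \cong \cpcso$ and $\overline B^4 \cong \cpcs$ from \cite{DV14} together with the unimodularity and signature ($0$ in Case I, $\pm 2$ in Case II) of the resulting intersection form, the $T^2$-weight data of the two actions force the stated relations $u^2 + uv = 0, v^2 = 0$ and $\bar u^2 + \bar u \bar v = 0, \bar v^2 + 2\bar u \bar v = 0$.

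For the Chern class formulas in (2) and (5), I would observe that $N^5_{k,l} \to B^4$ is the $U(1)$-bundle obtained from the $T^2$-bundle $S^3 \times S^3 \to B^4$ by quotienting by $i_{k,l}(U(1)) \subset T^2$; its first Chern class is obtained from $(u, v)$ by applying the unique surjection $\Z^2 \to \Z$ whose kernel is spanned by $(k, l)$, namely $(x, y) \mapsto -lx + ky$, giving $c_{k,l} = -lu + kv$ and similarly $\overline c_{k,l} = -l\bar u + k \bar v$. For $p_1$, the Hirzebruch signature theorem yields $\langle p_1, [B^4]\rangle = 3\sigma(B^4) = 0$, hence $p_1(TB^4) = 0$ (as $H^4(B^4) \cong \Z$ is torsion-free), while $\langle p_1, [\overline B^4]\rangle = 3\sigma(\overline B^4) = 6$ combined with the ring relations (which make $\bar u^2$ a generator of $H^4(\overline B^4)$ under a compatible orientation) gives $p_1(T\overline B^4) = 6\bar u^2$. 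For $w_2$ of the base, I would apply the Wu formula, which for an oriented $4$-manifold reads $w_2 \cdot x \equiv x^2 \pmod 2$ for all $x \in H^2$; pairing with $u$ and $v$ and using $u^2 \equiv uv$, $v^2 \equiv 0 \pmod 2$ gives a $2\times 2$ linear system in the coefficients of $w_2$ whose unique mod-$2$ solution is $w_2(TB^4) \equiv v \pmod 2$, and the analogous computation in Case II gives $w_2(T\overline B^4) \equiv \bar v \pmod 2$.

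Finally, for parts (3) and (6), each of $N^5_{k,l}$ and $X^5_{k,l}$ is a principal $S^1$-bundle $p: E \to B^4$ whose vertical tangent bundle is trivial, so $w_2(E) = p^* w_2(TB^4) \equiv p^* v \pmod 2$. The mod-$2$ Gysin sequence $\Z_2 \xrightarrow{\cup \bar c} H^2(B^4; \Z_2) \xrightarrow{p^*} H^2(E; \Z_2) \to 0$ identifies $\ker p^*$ in degree two with the mod-$2$ span of the Chern class of the bundle. For $N^5_{k,l}$ the Chern class is $c_{k,l} = -lu + kv \equiv v \pmod 2$ (since $k$ is odd and $l$ is even), so $p^* v = 0$ and $w_2(N^5_{k,l}) = 0$; for $X^5_{k,l}$ the Chern class $2 c_{k,l}$ vanishes mod $2$, so $p^*$ is injective mod $2$ and $w_2(X^5_{k,l}) = p^* v \neq 0$. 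The same argument with $\bar u, \bar v$ in place of $u, v$ establishes (6). The main obstacle lies in the first step: producing the exact integer relations in the cohomology rings in the specific basis $(u, v)$ (respectively $(\bar u, \bar v)$), rather than merely recovering the abstract ring structure from the connected-sum description; this is what distinguishes Cases I and II and requires careful tracking of the $T^2$-weights of the two actions. Once the ring relations are pinned down, all remaining invariants follow mechanically from the signature theorem, the Wu formula, and the Gysin sequence.
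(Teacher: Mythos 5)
Your overall roadmap matches the paper's, and parts (2), (3), (5), (6) --- the Chern class formula via the weight of the quotient representation $T^2\to T^2/i_{k,l}(\U(1))$, the signature-theorem computation of $p_1$, the identification of $w_2$ of the base, and the Gysin-sequence argument for $w_2$ upstairs --- are all sound and essentially the same as the paper's. (The paper uses $w_2^2\equiv p_1\bmod 2$ together with non-spinness of the intersection form rather than solving your Wu linear system, but both are correct.) The genuine gap is the step you yourself flag as the ``main obstacle'' and then leave unfinished: deriving the integer relations $u^2+uv=0$, $v^2=0$ (and $\bar u^2+\bar u\bar v=0$, $\bar v^2+2\bar u\bar v=0$) from the $T^2$-weight data. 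Invoking the diffeomorphisms $B^4\cong\cpcso$, $\overline B^4\cong\cpcs$ does not close this: those pin down the abstract ring isomorphism type but not how the transgressions $u,v$ sit inside it, and matching bases is exactly the work to be done. Nor does the raw Serre spectral sequence of $\pi:S^3\times S^3\to B^4$ (fiber $T^2$) give the degree-four relations for free once $d_2$ on $E_2^{0,1}$ is understood; controlling the differentials into $E^{4,0}$ needs more geometric input.

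The paper supplies that input by passing to the Borel fibration $S^3\times S^3 \to ET^2\times_{T^2}(S^3\times S^3)\to BT^2$ (homotopy equivalent to $S^3\times S^3\to B^4\to BT^2$) and, crucially, exploiting that the $T^2$-action is a \emph{product} of two $T^2$-representations $\rho_1,\rho_2$ on the two $S^3$ factors. This produces bundle maps from the Borel construction of $S^3\times S^3$ to the sphere bundles $ET^2\times_{\rho_i}S^3\to BT^2$, whose Euler classes $e_i$ are computed by the Borel--Hirzebruch theorem as products of (negatives of) the weights of $\rho_i$. A spectral-sequence comparison then gives $H^*(B^4)\cong H^*(BT^2)/(e_1,e_2)$. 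In Case I the weights of $\rho_1$ are $(1,0),(1,1)$ and those of $\rho_2$ are $(0,1),(0,1)$, so with $u=-\tau((1,0))$, $v=-\tau((0,1))$ one gets $e_1=u(u+v)$ and $e_2=v^2$; in Case II the weights of $\rho_2$ become $(0,1),(2,1)$, giving $\bar e_2=\bar v(2\bar u+\bar v)$. This product decomposition plus Borel--Hirzebruch is the mechanism your sketch is missing; with it in place, the rest of your argument goes through unchanged.
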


\begin{proof}  Let \(\phi:B^4\to BT^2\) be the classifying map of the bundle \(T^2\to S^3\times S^3\to B^4\).  The sequence of maps \(S^3\times S^3\to B^4\to BT^2\) is homotopy equivalent to the fibration \[S^3\times S^3\to ET^2\times_{T^2}(S^3\times S^3)\to BT^2.\] The representation \eqref{action1} is the product of representations 

\[\rho_1:T^2\to T^2\subset U(2):(z,w)\to(z,zw)\]
\[\rho_2:T^2\to T^2\subset U(2):(z,w)\to(w,w).\]
For \(i=1,2\) we have bundle maps

\[\begin{tikzcd}
S^3\times S^3\arrow{r}\arrow{d} & ET^2\times_{T^2}(S^3\times S^3)\arrow{r}\arrow{d}& BT^2\arrow{d} \\
S^3\arrow{r}& ET^2\times_{\rho_i}S^3\arrow{r} & BT^2
\end{tikzcd}\]
induced by projections onto the first or second factor of \(S^3\times S^3\).  Let \(e_i\) be the Euler class of the sphere bundle \(ET^2\times_{\rho_i}S^3\to BT^2\).  Using the spectral sequences of the fibrations above one sees that \(\phi^*\) induces an isomorphism

\[H^*(B,\f{Z})\cong H^*(BT^2)/(e_1,e_2).\]

Let \(\tau:H^*(T^2,\f{Z})\to H^*(BT^2,\f{Z})\) be the transgression of the universal bundle.  The weights of \(\rho_1\) are  \((1,0)\) and \((1,1)\in\) Hom\((\f{Z}^2,\f{Z})=\text{Hom}(\pi_1(T^2),\f{Z})=H^1(T^2,\f{Z})\).  By \cite{BH58}, \(e_1=(-\tau((1,0)))(-\tau((1,1)))\). Let \(u=-\tau((1,0))\) and \(v=-\tau((0,1))\).  Then \(e_1=u^2+uv\).  Similarly, the weights of \(\rho_2\) are both \((0,1)\) and \(e_2=v^2\).  We reuse the notation \(u,v\) for their images under \(\phi^*\).  That completes the proof of statement (1) of the lemma.    

It follows immediately that the signature of \(B\) is 0, and by Hirzebruch's signature theorem \(p_1(TB)=0\).  Since \(w_2(TB)^2=p_1(TB)\) mod 2, we can further conclude that \(w_2(TB)=v\) mod 2 or 0.  However, \(B\) is not spin (this follows from the intersection form) and thus \(w_2(TB)=v\) mod 2.     

One checks that \(U(1)\to N_{k,l}\to B\) is the \(U(1)\)-bundle associated to \(T^2\to S^3\times S^3\to B\) by the representation 
\[T^2\to U(1): (z,w)\mapsto z^{-l}w^k\]
which has weight \((-l,k)\).  
Let \(\tau_\phi:H^*(T^2,\f{Z})\to H^*(B,\f{Z})\) be the transgression of \(T^2\to S^3\times S^3\to B\).  Then again by \cite{BH58} we have \(c_{k,l}=\tau_\phi((l,-k)))\).  By naturality of the transgression, \(\tau_\phi=\phi^*\tau\), so \(c_{k,l}=-lu+kv.\)  Since \(k\) is odd and \(l\) is even, \(v\) mod 2 is in the kernel of the mod 2 Gysin sequence for \(U(1)\to N_{k,l}\to B\), and thus \(w_2(TN_{k,l})=0\).  By the same argument, the first Chern class of \(U(1)\to X_{k,l}\to B\) is \(2c_{k,l}\), and the corresponding map in the Gysin sequence is an isomorphism, so \(w_2(TX_{k,l})\neq0\).  

The case of \(\overline{B}\) follows similarly,  with \(\overline{u}\) and \(\overline{v}\) the images of generators of \(H^*(T^2,\f{Z})\) under minus the transgression of \(T^2\to S^3\times  S^3\to \overline{B}.\)  If we orient \(\overline{B}\) such that \(\left<\overline{u}^2,[\overline{B}]\right>=1\) the cohmology ring implies that the signature of \(\overline{B}\) is 2.  So \(p_1(T\overline{B})=6\overline u^2\) and  \(w_2(T\overline{B})=\overline{v}\) mod 2.   The rest of the lemma follows.

\end{proof}
We remark that existence of a diffeomorphism between \(N_{k,l}\) or \(\overline N_{k,l}\) and \(S^2\times S^3\) follows immediately from the cohomology and second Stiefel Whitney class, by the work of Smale \cite{Sm62}.    

\subsection{The fundamental group acts non-trivially on higher homotopy groups}\label{nontrivial}

The following spaces were first studied by Brieskorn \cite{Bri66}. See also \cite{HM68}, \cite{Bre72} and \cite[Chapter 9]{BoGa07} for more details.

Let $D^8:=\{z\in \C^4||z_0|^2+|z_1|^2+|z_2|^2+|z_3|^2\leq 1\}$ and $S^7:=\{z\in \C^4||z_0|^2+|z_1|^2+|z_2|^2+|z_3|^2= 1\}$ be the unit disk and unit sphere in $\C^4$ respectively. Let $f_d:\C^4\rightarrow \C$ be defined as $f_d(z):=z^{d}_0+z^2_1+z^2_2+z^2_3$, for $d\in \N_0$. For $\epsilon\in \R_{\geq 0}$, we define the \emph{Brieskorn varieties}\label{def:brieskornvar}
\vspace{-0.5mm}
$$W^6_\epsilon(d):=D^8\cap f_d^{-1}(\epsilon),$$
\vspace{-5mm}
$$M^5_\epsilon(d):=S^7\cap f_d^{-1}(\epsilon).$$

For $\epsilon > 0$, $W^6_\epsilon(d)$ is a smooth complex manifold with boundary $\partial W^6_\epsilon(d)=M^5_\epsilon(d)$, whereas for $\epsilon=0$, $M^5_0(d)$ is a smooth manifold but $W^6_0(d)$ is a variety with an isolated singular point at $z=0$. $M^5_\epsilon(d)$ comes with a natural orientation as a link and is sometimes also called a \emph{Brieskorn manifold}. We summarize some properties of these spaces in the following theorem; see \cite{HM68} and \cite{GT98} for details.  

\begin{thm}\hfill\label{thm:brieskvarprop}
\begin{enumerate}
\item $W^6_\epsilon(d)$ is homotopy equivalent to a bouquet $S^3\vee ... \vee S^3$ with $d-1$ summands. 
\item For $\epsilon$ sufficiently small, $M^5_\epsilon(d)$ is diffeomorphic to $M^5_0(d)$. 
\item If $d$ is odd, then $M^5_0(d)$ is diffeomorphic to $S^5$. 
\item If $d$ is even, then $M^5_0(d)$ is diffeomorphic to $S^2\times S^3$. 
\end{enumerate}
\end{thm}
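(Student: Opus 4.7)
The plan is to handle the four claims sequentially by invoking classical tools: Milnor's theorem on isolated hypersurface singularities for (1), the Ehresmann fibration theorem for (2), and the Brieskorn--Milnor--Orlik calculation of intersection forms together with the Barden--Smale classification for (3) and (4). All of this is standard material; the paper cites \cite{HM68} and \cite{GT98} for the details.

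For (1), the map $f_d$ has an isolated critical point at $0$, so by Milnor's theorem the smooth fiber is homotopy equivalent to a wedge of $3$-spheres (the middle dimension), with the number of wedge summands equal to the Milnor number
\[
\mu \;=\; \dim_\C \C\{z_0,z_1,z_2,z_3\}/(\partial_{z_0}f_d,\partial_{z_1}f_d,\partial_{z_2}f_d,\partial_{z_3}f_d).
\]
The Jacobian ideal is $(dz_0^{d-1},2z_1,2z_2,2z_3)$, so the quotient ring is $\C[z_0]/(z_0^{d-1})$ and $\mu=d-1$. Claim (2) is a direct application of Ehresmann: since $0$ is the only critical value of $f_d$ and the level sets $f_d^{-1}(\epsilon)$ meet $S^7$ transversely for small $\epsilon$, a gradient-like vector field on a tubular neighborhood of the singular fiber inside $S^7$ flows $M^5_\epsilon(d)$ diffeomorphically onto $M^5_0(d)$.

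For (3) and (4) I use (2) to replace $M^5_0(d)$ with $M:=\partial W^6_\epsilon(d)$. Simple connectedness of $M$ is Milnor's general fact about links of isolated hypersurface singularities of complex dimension $\geq 3$. The relevant portion of the exact sequence of the pair $(W,M)$, using Lefschetz duality to identify $H_k(W,M)\cong H^{6-k}(W)$, reads
\[
0\to H_3(M)\to H_3(W)\xrightarrow{\;I\;} H_3(W,M)\to H_2(M)\to 0
\]
with both middle groups $\Z^{d-1}$ and $I$ the intersection form on the Milnor fiber. The Milnor--Orlik formula gives the monodromy eigenvalues on $H_3(W;\C)$ as $-\zeta$ where $\zeta$ ranges over the nontrivial $d$-th roots of unity, since the exponents are $(d,2,2,2)$. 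When $d$ is odd no eigenvalue equals $1$, so $\det I=\pm 1$, and the sequence forces $H_2(M)=H_3(M)=0$; when $d$ is even exactly one eigenvalue is $1$, so $H_2(M)\cong H_3(M)\cong\Z$, and one checks that the intersection form is equivalent over $\Z$ to $\mathrm{diag}(1,\ldots,1,0)$ so that no torsion appears in the cokernel.

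Finally, the Milnor fiber $W^6_\epsilon(d)$ is a complex hypersurface in $\C^4$ with trivial holomorphic normal bundle, hence has trivial canonical bundle and is spin; therefore $M$ is spin. The Barden--Smale classification of simply connected spin $5$-manifolds then yields $M\cong S^5$ in case $H_2(M)=0$ and $M\cong S^2\times S^3$ in case $H_2(M)\cong\Z$. I expect the main obstacle to be the intersection-form computation: one must pass from the Milnor--Orlik spectral information to a genuine statement about the integral form, in particular ruling out torsion in the cokernel. The cleanest route is to write the Brieskorn--Pham Seifert form explicitly in the $(d,2,2,2)$ case and reduce it over $\Z$; once that reduction is in hand, the remaining Smale-type identifications are immediate.
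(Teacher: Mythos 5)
The paper offers no proof of this statement; it cites \cite{HM68} and \cite{GT98}, and your outline is essentially the argument contained in those references (Milnor fibration and Milnor number for (1), stability of the link for (2), monodromy data together with Smale's classification of simply connected spin $5$-manifolds for (3) and (4)). The computation $\mu=d-1$ is correct, the spin conclusion via triviality of the canonical bundle is correct, and the exact sequence of the pair $(W,M)$ with Lefschetz duality is the right mechanism.

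One genuine slip: the intersection form on $H_3$ of the real $6$-manifold $W^6_\epsilon(d)$ is \emph{skew}-symmetric, so it cannot be equivalent over $\Z$ to $\mathrm{diag}(1,\ldots,1,0)$. The correct integral normal form is a direct sum of hyperbolic $2\times 2$ blocks $\left(\begin{smallmatrix}0&1\\-1&0\end{smallmatrix}\right)$, together with a single zero block when $d$ is even and none when $d$ is odd; this still yields cokernel $\Z$ resp.\ $0$, so the conclusion stands, but the asserted normal form does not. You have also correctly flagged the one real gap: the monodromy eigenvalue count only controls the rank of the radical over $\Q$, and passing to the integral statement that the form is unimodular off the radical requires the explicit Pham--Brieskorn Seifert/intersection matrix for the exponents $(d,2,2,2)$ and a reduction over $\Z$ -- which is exactly what \cite{HM68} carries out, and what your closing remark proposes. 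Modulo that computation and the symmetry slip, the argument is complete and follows the same route as the sources the paper delegates to.
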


\subsection*{Cohomogeneity one action}\label{sec:cohomonebries}

An action of a compact Lie group $G$ on a smooth manifold $M$ is said to be of \emph{cohomogeneity one} if the orbit space $M/G$ is one-dimensional (see for example \cite[\textsection 6.3]{AlBe15} for more details on such actions).

For more details on the following cohomogeneity one action, see also \cite[\textsection 4.2]{DGA21} and \cite[\textsection 1]{GVWZ06}. Let $U(1)\times O(3)$ act on $\C^4$ in the following way. For $(w,A)\in U(1)\times O(3)$ and $z\in \C^4$, we set $$(w,A)\cdot z=(w^2z_0,(A(w^dz_1,w^dz_2,w^dz_3)^T)^T).$$ This restricts to an action by $\Z_{2d}\times O(3)$ on $W^6_\epsilon(d)$ and $M^5_\epsilon(d)$ ($\epsilon\neq 0$), and by $U(1)\times O(3)$ on  $W^6_0(d)$ and $M^5_0(d)$.  The action on $M^5_0(d)$ is of cohomogeneity one. Indeed, each orbit is uniquely identified by the value of \(|z_0|\), and the range of \(|z_0|\) is \([0,t]\), where \(t\) is the unique positive solution to \(t^d+t^2=1\).   The principal isotropy type, when \(|z_0|\in(0,1)\), is $\Z_2\times O(1)$. The singular isotropy types are $U(1)\times O(1)$ when \(|z_0|=0\) and $\Z_2\times O(2)$ when \(|z_0|=t\). The corresponding singular orbits are both of codimension two.  

Now consider $\tau=(1,-\text{Id})\in U(1)\times O(3)$. \(\tau\) is a holomorphic, orientation preserving involution on $W^6_\epsilon(d)$, acting without fixed points on $M^5_\epsilon(d)$ for $0\leq\epsilon<1$. For $0<\epsilon<1$, the fixed points of the action of $\tau$ on $W^6_\epsilon(d)$ are $p_j=(\lambda_j,0,0,0)$, $1\leq j\leq d$, where $\lambda_j$ is a complex $d$-root of $\epsilon$ for all $j$. These isolated fixed points lie in the interior of $W^6_\epsilon(d)$. 

We call the quotient manifold $Q^5_\epsilon(d):=M^5_\epsilon(d)/\tau$ a \emph{Brieskorn quotient}\label{def:brieskornquotient}.  \(Q^5_\epsilon(d)\) admits an action by \(O(3)/(\pm\text{Id})=SO(3)\). Some important properties of these quotients are summarized in the following; see \cite{Bro67} and \cite{GT98}. 

\begin{thm}\label{thm:briesquoproperties}\hfill
\begin{enumerate}
\item For all $\epsilon$ sufficiently small, $Q^5_\epsilon(d)$ is diffeomorphic to $Q^5_0(d)$. 
\item For $d$ odd, $Q^5_0(d)$ is homotopy equivalent to $\R\emph{P}^5$. 
\item For \(d\) even, $\pi_1(Q^5_0(d))$ acts non-trivially on $\pi_2(Q^5_0(d))\cong \Z$, $H_2(Q^5_0(d);\Z)=0$, and $w_2(Q^5_0(d))\neq 0$.

\end{enumerate}
\end{thm}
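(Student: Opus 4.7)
The plan is to verify each of the three claims using the cohomogeneity one description of $M^5_0(d)$ together with the explicit form of the involution $\tau = (1, -\Id) \in U(1) \times O(3)$. For (1), \tref{thm:brieskvarprop}(2) provides a diffeomorphism $M^5_\epsilon(d) \cong M^5_0(d)$ for small $\epsilon$; since $\tau$ commutes with the entire $U(1)\times O(3)$-action, which preserves each $M^5_\epsilon(d)$, the standard gradient-flow isotopy realising this diffeomorphism can be chosen $\tau$-equivariantly and hence descends to the quotients. For (2), when $d$ is odd, \tref{thm:brieskvarprop}(3) gives $M^5_0(d)\cong S^5$ and $\tau$ acts freely, so $Q^5_0(d)$ is a closed $5$-manifold with $\pi_1 = \Z_2$ and universal cover $S^5$. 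A classifying map $Q^5_0(d) \to K(\Z_2,1) = \RP^\infty$ factors cellularly through $\RP^5$, and the resulting map $Q^5_0(d) \to \RP^5$ is an isomorphism on $\pi_1$ that lifts to a degree-one self-map of $S^5$ on universal covers; Whitehead's theorem then yields a homotopy equivalence.

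The heart of the argument is (3). To verify the nontriviality of the $\pi_1$-action on $\pi_2$, I would identify the singular orbit at $|z_0|=t$ as $B_+ = (U(1)\times O(3))/(\Z_2\times O(2)) \cong S^1 \times S^2$, noting that the $O(2)$-isotropy stabilises a vector and hence $O(3)/O(2) = S^2$. The $U(1)$-component of $\tau$ is trivial, so $\tau$ acts trivially on the $S^1$-factor, while its $O(3)$-component $-\Id$ acts on $O(3)/O(2)=S^2$ as the antipodal map. Thus $\tau$ restricts to each $S^2$-fiber of $B_+\to S^1$ as the antipodal involution. A Mayer--Vietoris computation on the tubular-neighborhood decomposition $M^5_0(d) = \nu(B_-)\cup\nu(B_+)$, with $B_-\cong SO(3)$ and $B_+\cong S^1\times S^2$, shows that such an $S^2$-fiber generates $\pi_2(M^5_0(d))\cong H_2(M^5_0(d))\cong\Z$; since $\tau$ reverses its orientation, the deck transformation acts as $-1$ on $\pi_2$.

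Given the $-1$ action on $\pi_2=H_2(M^5_0(d))$, the vanishing $H_2(Q^5_0(d);\Z)=0$ follows from the Cartan--Leray spectral sequence of $M^5_0(d)\to Q^5_0(d)$: the transfer together with $H_2(M)^{\Z_2}=0$ forces $H_2(Q;\Z)$ to be $2$-torsion, and the surviving $\Z_2$-summand $E^2_{0,2}=H_0(\Z_2;\Z^-)$ is killed by the transgression $d_3\colon E^3_{3,0}\to E^3_{0,2}$, whose non-vanishing I would check by naturality against the $\RP^2\subset Q^5_0(d)$ obtained by projecting the $\tau$-invariant $S^2$ of the previous step. With $H_2(Q;\Z)=0$ and $H_1(Q;\Z)=\Z_2$, universal coefficients give $H^2(Q;\Z_2)\cong\Z_2$, so $w_2(Q)\ne 0$ reduces to showing that $Q^5_0(d)$ is not spin, equivalently that $\tau$ does not lift to the unique spin structure of $M^5_0(d)$. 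I expect this last step to be the principal obstacle: it requires a careful accounting of how $-\Id\in O(3)$ lifts to $\Spin(3)$ along the cohomogeneity one splitting of $TM^5_0(d)$, and in practice the cleanest route is to construct an explicit characteristic submanifold $P\subset Q^5_0(d)$ from the cohomogeneity one decomposition and compute its $\Pin^+$-cobordism class directly.
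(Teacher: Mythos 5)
The paper itself does not prove \tref{thm:briesquoproperties}; it cites \cite{Bro67} and \cite{GT98}, so there is no internal proof to compare against. Your outline for (1) and for the $\pi_2$-action and $H_2$-vanishing in (3) is sound: the equivariant isotopy for (1), the identification $B_+\cong S^1\times S^2$ with $\tau=(1,-\Id)$ acting antipodally on the $S^2$-factor, and the Cartan--Leray argument with naturality against the sub-cover $S^2\to\RP^2$ all work, provided you actually carry out the Mayer--Vietoris step showing that this $S^2$ generates $H_2(M^5_0(d))\cong\Z$. In (2) you should be slightly more careful: an equivariant lift of a cellular classifying map $Q^5_0(d)\to\RP^5$ has \emph{odd} degree by Borsuk--Ulam, but odd degree is not $\pm 1$, so Whitehead's theorem does not apply directly; one should instead invoke the classification of homotopy spherical space forms (a finite complex with $\pi_1=\Z_2$ and universal cover $S^{2n+1}$ is homotopy equivalent to $\RP^{2n+1}$), which is precisely what \cite{Bro67} addresses.

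The genuine gap, which you flag yourself, is $w_2(Q^5_0(d))\neq 0$. Rather than chasing $-\Id\in\Or(3)$ through $\Spin(3)$ along the cohomogeneity-one splitting, or constructing a characteristic $\Pin^+$ submanifold, the short argument uses the bounding manifold that is already central to the paper. $W^6_\eps(d)$ is spin because $H^2(W^6_\eps(d);\Z_2)=0$ (it is homotopy equivalent to a wedge of $3$-spheres), and $\tau$ acts on $W^6_\eps(d)$ with isolated fixed points $p_1,\dots,p_d$, at each of which $d\tau=-\Id$ on $T_{p_j}W\cong\R^6$. The two lifts of $-\Id\in\SO(6)$ to $\Spin(6)$ are $\pm\,e_1\cdots e_6$, which square to $(-1)^3=-1$; hence any lift $\tilde\tau$ of $\tau$ to $P_{\Spin}(W)$ satisfies $\tilde\tau^2=-1$ at each $p_j$, and therefore everywhere on the connected $W$. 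Restricting to $\partial W=M^5_\eps(d)$, the (unique) spin structure admits no order-two equivariant lift of the free $\tau$-action, so $Q^5_\eps(d)\cong Q^5_0(d)$ is not spin; since it is orientable and $H^2(Q^5_0(d);\Z_2)\cong\Z_2$, this forces $w_2\neq 0$. This is the same ``codimension of the fixed set'' mechanism the paper uses in the $X_{k,l}$ case, where the codimension-two fixed set likewise forces $\widetilde{d\tau}^2=-1$.
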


From now on, we will always assume that $d$ is even, unless otherwise stated.

We can now use the construction of Grove-Ziller \cite{GZ00} to equip \(M^5_0(d)\) with an \(U(1)\times O(3)\) invariant metric of nonnegative sectional curvature, which descends to nonnegatively curved \(U(1)\times \SO(3)\) invariant metric on \(Q^5_0(d)\). Using Cheeger deformations, one obtains a metric which simultaneously has nonnegative sectional and positive scalar curvature on \(Q^5_0(d)\) (see \cite[\textsection 6.1.4]{We21}).

\section{Diffeomorphism Types}\label{diffeo}
In this section we identify the diffeomorphism types of the manifolds described in \sref{manifolds} using \tref{su}.  For the family \(Q^5_0(d),\) the following is  immediate.

\begin{lem}\label{Qdiff}
Let \(M\) satisfy the hypotheses of \tref{thm}.  Assume further that \(\pi_1(M)\) acts non-trivially on \(\pi_2(M)\).  For some $d\in\{0,2,4,6,8\}$, \(M\) is diffeomorphic to $Q^5_0(d+16k)$ for all \(k\in\f{N}_0\).      
\end{lem}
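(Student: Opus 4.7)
The plan is to apply the classification in \tref{su}(2) twice: once to $M$, to locate it within the model list $\{Q^5_0(d)\}_{d\in\{0,2,4,6,8\}}$, and once to each Brieskorn quotient $Q^5_0(d+16k)$, to recognize it as the same model. First, \tref{su}(2) applied to $M$ yields a unique $d\in\{0,2,4,6,8\}$ with $M$ diffeomorphic to $Q^5_0(d)$; in the notation of that theorem, $d=[P]\in\Omega_4^{\text{Pin}^+}/\pm$ for a characteristic submanifold $P\subset M$.

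Next, for each $k\in\f{N}_0$, I would verify that $Q^5_0(d+16k)$ again meets the hypotheses of \tref{su}(2). Since $d+16k$ is even, \tref{thm:brieskvarprop}(4) gives universal cover diffeomorphic to $S^2\times S^3$, while \tref{thm:briesquoproperties}(3) supplies $\pi_1\cong\f{Z}_2$ acting non-trivially on $\pi_2$ together with $w_2\neq 0$, so the quotient is non-spin; orientability of $Q^5_0(d+16k)$ is standard for these Brieskorn quotients. The classification therefore produces some $d'\in\{0,2,4,6,8\}$ with $Q^5_0(d+16k)$ diffeomorphic to $Q^5_0(d')$, where $d'$ is the Pin$^+$ bordism class of a characteristic submanifold of $Q^5_0(d+16k)$ in $\Omega_4^{\text{Pin}^+}/\pm$.

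The remaining step, which I view as the heart of the matter, is to show $d'=d$. Because $\Omega_4^{\text{Pin}^+}\cong\f{Z}_{16}$, it suffices to establish that the characteristic submanifold class of $Q^5_0(D)$ depends on the even integer $D$ only through $D\bmod 16$ and that on $\{0,2,4,6,8\}$ it agrees with $D$. Both of these facts are essentially the content of the computation of $[P]$ for Brieskorn quotients carried out in \cite{GT98}: one constructs a characteristic submanifold directly from the Brieskorn presentation and realizes it as a Pin$^+$ $4$-manifold whose bordism class is $D$ times a fixed generator of $\f{Z}_{16}$. Granting this, $[P']\equiv d\pmod{16}$, and since $d\in\{0,\ldots,8\}$ already lies in the fundamental domain for the $\pm$ quotient, we get $d'=d$, hence $Q^5_0(d+16k)\cong Q^5_0(d)\cong M$ for every $k\in\f{N}_0$.

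The main obstacle is therefore the $16$-periodicity of the characteristic submanifold invariant of $Q^5_0(D)$; everything else is a bookkeeping application of the theorems quoted in \sref{manifolds}. Fortunately this periodicity is a calculation that can be imported from \cite{GT98} rather than redone, which reduces the lemma to a one-line comparison in $\Omega_4^{\text{Pin}^+}/\pm$.
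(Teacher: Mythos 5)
Your proof is correct and is exactly the argument the paper treats as ``immediate'': apply \tref{su}(2) to $M$ to get $d$, check that each $Q^5_0(d+16k)$ also satisfies the hypotheses (using \tref{thm:brieskvarprop}(4), \tref{thm:briesquoproperties}(3), freeness of $\tau$, and orientability), and use the fact from \cite{GT98}/\cite{Su12} that the characteristic submanifold class of $Q^5_0(D)$ is $D\bmod 16$ in $\Omega_4^{\text{Pin}^+}\cong\f{Z}_{16}$ to conclude $d'=d$. The paper gives no written proof, so nothing to compare beyond noting you correctly identified the one nonformal ingredient (16-periodicity of the Pin$^+$ bordism class of the characteristic submanifold) as coming from the cited references.
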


For the families \(X_{k,l}\) and \(\overline X_{k,l}\), since \(\f{Z}_2\subset U(1)\) acts trivially on \(\pi_2(N_{k,l})\) and, \(\pi_2(\overline N_{k,l})\), \(\pi_1(X_{k,l})\) and \(\pi_1(\overline X_{k,l})\) act trivially on \(\pi_2(X_{k,l})\) and \(\pi_2(\overline X_{k,l})\) respectively.  It follows from \lref{invariants} and \tref{su} that the diffeomorphism types of \(X_{k,l}\) and \(\overline X_{k,l}\) are determined by the cobordism classes of characteristic submanifolds.  Those cobordism classes can in turn be determined by a result in \cite{Goo20b}  using topological invariants of the bundles \(X_{k,l}\to B\) and \(\overline X_{k,l}\to\overline B.\)

\begin{lem}\label{characteristics}
	Let \(P\subset X^5_{k,l}\) and \(\overline{P}\subset \overline X^5_{k,l}\) be a characteristic submanifolds.  Identifying \(\Omega_4^{\text{Pin}^+}\) with \(\f{Z}_{16}\), \(P\) and \(\overline{P}\) admit Pin\(^+\) structures such that
	\[[P]=\left(1+\frac{\ep}{2}\right)\left(l^2+2kl\right)\text{ \normalfont mod }16\]
	and
	\[[\overline{P}]=\left(1+\frac{\ep}{2}\right)\left(l^2+2kl+2k^2\right)-\ep\text{ \normalfont mod }16\]
	where \(\ep\in\{1,-1\}\) is an unknown constant.  
\end{lem}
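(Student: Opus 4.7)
The strategy is to apply a formula from the first author's earlier paper \cite{Goo20b}, which, for a principal $\U(1)$-bundle $X\to B$ over a simply connected closed $4$-manifold with $X$ orientable non-spin, $\pi_1(X)=\f{Z}_2$, and universal cover spin, computes the class of a characteristic submanifold in $\Omega_4^{\Pin^+}/\pm\cong\f{Z}_{16}/\pm$ as a prescribed combination of intersection numbers built from the first Chern class of the bundle and the characteristic classes of the base. By \lref{invariants}, both $X^5_{k,l}\to B$ and $\overline X^5_{k,l}\to\overline B$ satisfy these hypotheses, with first Chern classes $2c_{k,l}$ and $2\overline c_{k,l}$ respectively.

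The main computational input consists of the top-degree intersection numbers on $B$ and $\overline B$. Using the relations $u^2=-uv$ and $v^2=0$ from \lref{invariants},
\[c_{k,l}^2 = (-lu+kv)^2 = -(l^2+2kl)\,uv,\]
while, using $\bar u^2=-\bar u\bar v$ and $\bar v^2=-2\bar u\bar v$,
\[\overline c_{k,l}^{\,2} = -(l^2+2kl+2k^2)\,\bar u\bar v.\]
The remaining inputs come straight from \lref{invariants}: $\sigma(B)=0$ and $p_1(TB)=0$, while $\sigma(\overline B)=2$ and $p_1(T\overline B)=6\bar u^2=-6\bar u\bar v$. With the orientations chosen in \lref{invariants} (so that, for instance, $\langle\bar u^2,[\overline B]\rangle=1$, hence $\langle\bar u\bar v,[\overline B]\rangle=-1$), these evaluate to $\langle c_{k,l}^2,[B]\rangle = l^2+2kl$ and $\langle\overline c_{k,l}^{\,2},[\overline B]\rangle = l^2+2kl+2k^2$.

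Substituting into the formula of \cite{Goo20b} and simplifying gives the stated expressions. In the $B$ case, the vanishing of both $\sigma(B)$ and $p_1(TB)$ collapses everything to the contribution from $\langle c_{k,l}^2,[B]\rangle$, which the formula packages as $(1+\ep/2)(l^2+2kl)$. The coefficient $(1+\ep/2)$ records the integer-plus-half-integer structure of the formula; the sign $\ep\in\{\pm1\}$ reflects the ambiguity in choosing the homotopy class of characteristic map $X\to\RP^\infty$, which is precisely the $\pm$ ambiguity in $\Omega_4^{\Pin^+}/\pm$. In the $\overline B$ case the same combination applies to $l^2+2kl+2k^2$, and the nonzero signature and Pontryagin inputs combine to add a constant correction of $-\ep$ mod $16$, producing the stated formula.

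The main obstacle is matching sign and orientation conventions with \cite{Goo20b} and verifying that the single sign $\ep$ is truly a universal constant, i.e., independent of $k$, $l$, and of whether one works over $B$ or $\overline B$. This follows because the $\pm$ ambiguity has a single global source, namely the choice of a generator of $H^1(\RP^\infty;\f{Z}_2)$; once that is fixed, the formula of \cite{Goo20b} is canonical and the resulting $\ep$ is the same in every instance.
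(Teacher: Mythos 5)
Your proposal is correct and follows essentially the same route as the paper: apply the formula from \cite{Goo20b} for $[P]\in\Omega_4^{\text{Pin}^+}$ in terms of the intersection number $\langle c_{k,l}^2,[B]\rangle$ and the signature of the base, then plug in the ring structure of $H^*(B)$, $H^*(\overline B)$ from \lref{invariants}. The paper states the precise formula $[P]=(1+\tfrac{\ep}{2})\langle c_{k,l}^2,[B]\rangle-\tfrac{\ep}{2}\sign(B)$ and also explicitly verifies the hypothesis $w_2(TB)=c_{k,l}\bmod 2$ needed to invoke Lemmas 1.5 and 1.7 of \cite{Goo20b}; you state the formula's shape only abstractly and absorb that hypothesis into the ``non-spin with spin cover'' condition, which is equivalent but worth making explicit. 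Your sign for $\langle c_{k,l}^2,[B]\rangle$ differs from the paper's (the paper uses the orientation $\langle uv,[B]\rangle=1$ and gets $-(l^2+2kl)$), but this is immaterial since the class is only determined up to the choice of Pin$^+$ structure, i.e.\ up to sign in $\Z_{16}$.
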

\begin{proof}
 As we saw in the proof of \lref{invariants}, the principal bundle \(U(1)\to X_{k,l}\to B\) has first Chern class \(2c_{k,l}\).  Furthermore, since \(l\) is even and \(k\) is odd, \(c_{k,l}=v\) mod 2, so \(w_2(TB)=c_{k,l}\) mod 2.  Lemmas 1.5 and 1.7 in \cite{Goo20b} then imply that a characteristic submanifold \(P\subset X_{k,l}\) admits a Pin\(^+\) structure such that 
\[[P]=\left(1+\frac{\ep}{2}\right)\left<c_{k,l}^2,[B]\right>-\frac{\ep}{2}\sign(B).\]  Using \lref{invariants} we can orient \(B\) such that \(\langle uv,B\rangle=1\), and compute 
\(\langle c_{k,l}^2,[B]\rangle =-l^2-2lk\).  The formula for \([\overline{P}]\) follows using the orientation of \(\overline{B}\) such that \(\langle \overline{u}^2,[\overline{B}]\rangle=1\) and sign\(([\overline{B}])=2.\)  

\end{proof}

We combine \lref{characteristics} and \tref{su} to prove
\begin{lem}\label{infinitelists}
Let \(M\) satisfy the hypotheses of \tref{thm}.  Assume further that \(\pi_1(M)\) acts trivially on \(\pi_2(M)\).  Then \(M\) is diffeomorphic to either \(X_{k,2}\) or \(\overline X_{k,4}\) for an infinitely many \(k\in\f{Z}.\)  	
\end{lem}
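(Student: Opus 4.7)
The plan is to invoke Theorem \ref{su}(1), which identifies $M$ with $X(q)$ for a unique $q \in \{0,2,4,6,8\}$ corresponding to the characteristic submanifold class in $\Omega_4^{\mathrm{Pin}^+}/\pm$, and then to verify that each such $q$ is realized as the invariant of $X_{k,2}$ or $\overline X_{k,4}$ for infinitely many $k$. First I check that the families $X_{k,l}$ and $\overline X_{k,l}$ themselves satisfy the hypotheses of Theorem \ref{su}(1). Orientability of $X_{k,l}$ (and similarly $\overline X_{k,l}$) follows because $-1 \in U(1)$ lies in a connected Lie group acting on the orientable manifold $N_{k,l}$, so it acts by orientation-preserving diffeomorphisms; the same connectedness argument shows that the deck group $\Z_2$ acts trivially on $\pi_2$. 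Lemma \ref{invariants} supplies $w_2 \neq 0$ and, together with Smale's classification, identifies the universal covers with $S^2 \times S^3$.

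With the hypotheses verified, Theorem \ref{su}(1) asserts that $X_{k,2}$ (respectively $\overline X_{k,4}$) is diffeomorphic to $X(q)$ precisely when the class $[P]$ (respectively $[\overline P]$) from Lemma \ref{characteristics} equals $\pm q \bmod 16$. Substituting $l=2$ into the first formula yields $[P] = (1+\e/2)\cdot 4(k+1) \bmod 16$; since admissible $k$ are odd, $k+1$ runs over positive even integers, and a short periodic computation shows that for either sign $\e = \pm 1$ the value of $[P] \bmod 16$ lies in $\{0,4,8,12\}$, which reduces modulo the $\pm$ identification to $\{0,4,8\}$, each achieved by infinitely many admissible $k$. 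For $\overline X_{k,4}$ the admissible $k$ are the odd integers (so $\gcd(k,4)=1$), and substituting $l=4$ reduces $[\overline P]$ to $7 + 12k + 3k^2 \bmod 16$ when $\e = 1$ and to $9 + 4k + k^2 \bmod 16$ when $\e = -1$. Checking residues of $k$ modulo $8$ shows both expressions take values in $\{6, 14\}$, reducing to $\{2, 6\}$ after the $\pm$ identification, again with each value realized for infinitely many $k$.

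Putting the two computations together, $\{0,4,8\} \cup \{2,6\} = \{0,2,4,6,8\}$ exhausts all classes coming from the classification: if $q \in \{0,4,8\}$ then $M \cong X_{k,2}$ for infinitely many $k$, and if $q \in \{2,6\}$ then $M \cong \overline X_{k,4}$ for infinitely many $k$. The main obstacle is the unknown sign $\e$ in Lemma \ref{characteristics}: since $\e$ cannot be pinned down by the arguments available here, the argument must show that the realized sets of characteristic classes are the same for both choices of $\e$, and this is exactly what the periodic computations deliver. It is also worth noting that the specific pairing of $l = 2$ with the trivial $\pi_1$-action and $l = 4$ with the partner family is essential: other combinations would leave residues uncovered and the union $\{0,2,4,6,8\}$ would fail.
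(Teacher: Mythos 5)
Your proposal is correct and follows the same route as the paper: invoke Theorem~\ref{su}(1), verify the families satisfy its hypotheses, then substitute $l=2$ and $l=4$ into the formulas of Lemma~\ref{characteristics} and check residues mod~$16$ to see that $\{0,4,8\}$ and $\{2,6\}$ are each realized by infinitely many $k$ regardless of the sign $\e$. The paper packages the same arithmetic as an explicit table identifying each $X(q)$ with $X^5_{8r\pm\cdot,2}$ or $\overline X^5_{8r\pm\cdot,4}$, which makes the infinitude visibly parametrized by $r$; your modular analysis is equivalent. One small caveat: your closing remark that other choices of $l$ "would leave residues uncovered" is too strong --- the paper explicitly notes that each $X(q)$ is also realized by infinitely many $X_{k,l}$ or $\overline X_{k,l}$ with $l\neq 2,4$; the values $l=2$ and $l=4$ are merely convenient, not forced.
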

 
\begin{proof}
	By \tref{su}, \(M\) is diffeomorphic to \(X(q)\) for some \(q\in\{0,2,4,6,8\}.\)  So we need only prove the lemma for those manifolds.   Applying \lref{characteristics} and \tref{su}, we see that for all \(r\in\f{Z}\)
\begin{align*}	X(0)\cong X^5_{8r-1,2}  && 	X(2)\cong \overline X^5_{8r+2\ep+3,4} \\
	X(4)\cong X^5_{8r+1,2} && X(6)\cong \overline X^5_{8r+2\ep-1,4} \\
	X(8)\cong X^5_{8r+3,2}  
	  \end{align*}
	where \(\ep\) is the unkown constant in \lref{characteristics} 
\end{proof}

We note that each \(X(q)\) is diffeomorphic to infinitely many \(X_{k,l}^5\) or \(\overline X_{k,l}^5\) with \(l\) different from \(2\) or \(4\), but the lists above suffice for our purpose.   

\section{Computation of the \(\eta\) invariants}\label{eta}
In this section, we compute relative \(\eta\) invariants for the manifolds \(Q^5_0(d)\), \(X^5_{k,l}\) and \(\overline X^5_{k,l}\) and prove \tref{thm}.  That proof will utilize the following lemma, which follows arguments of \cite{DGA21} and \cite{We21}.  
\begin{lem}\label{modulispaces}
	Let \(M^{2n+1}\) be a closed spin\(^c\) manifold with finite fundamental group \(\pi_1(M)\) and \(\alpha:\pi_1(M)\to U(k)\) a unitary representation.  Let \(\{M_i\}\) be an infinite set of spin\(^c\) manifolds, diffeomorphic to \(M\), each equipped with a flat connection on the principal \(U(1)\) bundle associated to the spin\(^c\) structure. For each \(i\) let \(\alpha_i:\pi_1(M_i)\to U(k)\) be a unitary representation which pulls back to \(\alpha\) and \(g_i\) a nonnegatively curved Riemannian metric on \(M_i\) with positive scalar curvature.  If \(\{\eta_{\alpha_i}(M_i,g_i)\}\)  is infinite, then \(\modse{sec}(M)\) and  \(\mods{Ric}(M)\) have infinitely many path components.  
\end{lem}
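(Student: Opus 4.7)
The plan is to pull back the data on each $M_i$ to $M$ and then combine \pref{prop:etainvsame} with the finiteness of admissible spin$^c$ data on $M$. For each $i$, fix a diffeomorphism $\phi_i\colon M\to M_i$ and set $\tilde g_i:=\phi_i^* g_i$, together with the pulled-back spin$^c$ structure $\tilde s_i$, flat connection $\tilde A_i$, and representation $\tilde\alpha_i$. Since $\alpha_i$ pulls back to $\alpha$, we have $\tilde\alpha_i=\alpha$, and the diffeomorphism invariance of the $\eta$ invariant gives
\[
\eta_\alpha(M,\tilde g_i;\tilde s_i,\tilde A_i)\;=\;\eta_{\alpha_i}(M_i,g_i),
\]
so this sequence still assumes infinitely many values.

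I would then argue by contradiction. Suppose only finitely many path components of $\modse{sec}(M)$ are represented by the $\tilde g_i$. Then there is an infinite index set $I$ and a single $\tilde g_{i_0}$ such that, for each $i\in I$, there exists $\psi_i\in\mathrm{Diff}(M)$ with $\psi_i^*\tilde g_i$ and $\tilde g_{i_0}$ in a common path component of $\mathfrak{R}_{\text{sec}\geq 0}(M)$. Each metric along the connecting path has $\text{scal}\geq 0$, while the endpoints have $\text{scal}>0$; a short-time Ricci flow with parameter depending continuously on the path variable and vanishing at the endpoints deforms this into a path in $\mathfrak{R}_{\text{scal}>0}(M)$, since scalar curvature satisfies $\partial_\tau R=\Delta R+2|\Ric|^2$ and the strong maximum principle forces $R>0$ instantly. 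Applying \pref{prop:etainvsame} with respect to the pulled-back spin$^c$ data $(\psi_i^*\tilde s_i,\psi_i^*\tilde A_i)$, and then using diffeomorphism invariance on the left-hand side, I obtain
\[
\eta_\alpha(M,\tilde g_i;\tilde s_i,\tilde A_i)\;=\;\eta_\alpha(M,\tilde g_{i_0};\psi_i^*\tilde s_i,\psi_i^*\tilde A_i).
\]

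The closed manifold $M$ admits only finitely many spin$^c$ structures whose associated $\U(1)$ bundle carries a flat connection, since this requires $c_1$ to be torsion and $H^2(M;\Z)$ is finitely generated; and on each such bundle the flat connections modulo gauge form the finite group $\Hom(\pi_1(M),\U(1))$. Consequently the right-hand side above takes only finitely many values as $i$ ranges over $I$, while the left-hand side takes infinitely many by the first paragraph---a contradiction. Hence $\modse{sec}(M)$ has infinitely many path components.

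For the Ricci moduli space, I would Ricci-flow each $\tilde g_i$ for a short positive time to obtain a nearby metric $\hat g_i$ with $\Ric>0$; under such a small perturbation the $\eta$ invariant changes by at most an integer spectral-flow jump, so after passing to a subsequence the values $\{\eta_\alpha(M,\hat g_i;\tilde s_i,\tilde A_i)\}$ remain infinite. Since $\Ric>0$ implies $\text{scal}>0$, any path in $\mathfrak{R}_{\Ric>0}(M)$ already lies in $\mathfrak{R}_{\text{scal}>0}(M)$, so no further Ricci flow deformation of a connecting path is needed, and the preceding argument applies verbatim to yield infinitely many components of $\mods{Ric}(M)$. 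The main obstacle is the Ricci flow step---both the continuous family deformation of a nonnegatively curved path into $\mathfrak{R}_{\text{scal}>0}(M)$ and the production of Ricci-positive neighbors of the $\tilde g_i$---while the rest of the proof reduces to diffeomorphism invariance of $\eta$ and a pigeonhole argument on the finite set of admissible spin$^c$ data.
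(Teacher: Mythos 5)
Your approach is essentially the paper's: pull everything back to $M$ via fixed diffeomorphisms, apply \pref{prop:etainvsame} after a short Ricci-flow deformation into $\mathfrak{R}_{\text{scal}>0}(M)$, and pigeonhole on the finitely many admissible spin$^c$ data. Two steps, however, need repair.

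In the displayed equation following your application of \pref{prop:etainvsame}, the representation on the right-hand side should be $\psi_i^*\alpha=\alpha\circ(\psi_i)_*$, not $\alpha$: the diffeomorphism $\psi_i$ is arbitrary and need not preserve $\alpha$, and the relative $\eta$ invariant genuinely depends on the representation. The pigeonhole still works because $\Hom(\pi_1(M),\U(k))$ has only finitely many conjugacy classes when $\pi_1(M)$ is finite, so $\{\psi_i^*\alpha\}$ takes finitely many values --- but you must say so. The paper avoids this bookkeeping entirely by passing at the outset to the finite-index subgroup $H\subset\mathrm{Diff}(M)$ of diffeomorphisms preserving both $\alpha$ and the spin$^c$ structure, and comparing path components of $\mathfrak{R}_{\text{sec}\geq0}(M)/H$ rather than of $\modse{sec}(M)$, which is cleaner. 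You should also cite Ebin's slice theorem for the step that produces a path in $\mathfrak{R}_{\text{sec}\geq0}(M)$ from $\tilde g_{i_0}$ to $\psi_i^*\tilde g_i$ out of a path in $\modse{sec}(M)$.

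Your maximum-principle argument also has a gap: $\partial_\tau R=\Delta R+2|\Ric|^2$ and the strong maximum principle give $R>0$ for positive time only if the initial metric is not scalar flat, and this must be justified. Here it holds because a metric with $\sec\geq0$ and $\text{scal}\equiv 0$ is flat, impossible on a closed manifold with finite fundamental group. The paper instead invokes the B\"ohm--Wilking theorem that short-time Ricci flow of a nonnegatively curved metric on a closed manifold with finite fundamental group has positive Ricci curvature; this stronger conclusion is exactly what you need anyway when you pass to $\mods{Ric}(M)$. Finally, your appeal to an ``integer spectral-flow jump'' is both unnecessary and misleading: the Ricci-flow path from $\tilde g_i$ to $\hat g_i$ stays in $\mathfrak{R}_{\text{scal}>0}(M)$, so \pref{prop:etainvsame} already gives $\eta_\alpha(M,\tilde g_i;\tilde s_i,\tilde A_i)=\eta_\alpha(M,\hat g_i;\tilde s_i,\tilde A_i)$ on the nose.
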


\begin{proof}
	
	We first make a reduction to facilitate the proof.  Since the connections described in the lemma are all flat, each spin\(^c\) structure pulls back to a spin\(^c\) structure on \(M\) with flat associated \U(1) bundle.  There are finitely many isomorphism types of such bundles and finitely many spin\(^c\) structures corresponding to each type.  Thus we can give \(M\) a spin\(^c\) structure such that, restricting the set \(\{M_i\}\) if necessary, we may assume the diffeomorphisms between \(M\) and each \(M_i\) are spin\(^c\) structure preserving, without changing the other hypotheses.  We equip the principal \(U(1)\) bundle associated to that spin\(^c\) structure on \(M\) with a flat connection.

	Next, let \(H\subset\text{Diff}(M)\) be the group of diffeomorphisms which preserve \(\alpha\) and the spin\(^c\) structure.  By the previous discussion the orbit of the spin\(^c\) structure under Diff\((M)\) is finite. Since \(\pi_1(M)\) is finite, the orbit of \(\alpha\) is also finite. Thus \(H\) has finite index in Diff\((M)\).  It follows that \(\modse{sec}\) and \(\mods{Ric}\) have infinitely many components if and only if \(\mathfrak{R}_{\text{sec}\geq0}(M)/H\) and \(\mathfrak{R}_{\text{Ric}>0}(M)/H\) respectively have infinitely many components.

	For each \(i\) reuse the notation \(g_i\) for the pullback of that metric to \(M\).  Since the diffeomorphism from \(M\) to \(M_i\) is spin\(^c\) preserving, \(\alpha_i\) pulls back to \(\alpha\), and any two flat connections on a principal \(U(1)\) over a manifold with finite fundamental group are isomorphic,  \(\eta_\alpha(M,g_i)=\eta_{\alpha_i}(M_i,g_i)\).  Denote by \([g_i],[g_j]\) the images of \(g_i,g_j\) in \(\mathfrak{R}_{\text{sec}\geq0}(M)/H\).  Suppose \([g_i]\) and \([g_j]\) lie in the same path component.  By Ebin's slice theorem, there exists a path \(\gamma:[0,1]\to\mathfrak{R}_{\text{sec}\geq0}(M)\) such that \(\gamma(0)=g_i\) and and \(\gamma(1)=\phi^*g_j\) for some \(\phi\in H\).  As demonstrated in \cite{BW07}, if we apply the Ricci flow for any small positive amount of time to \(\gamma\), we obtain a path \(\tilde\gamma:[0,1]\to\mathfrak{R}_{\text{Ric}>0}(M).\)  Concatenating \(\tilde\gamma\) with the path formed by applying the Ricci flow to \(g_i\) and \(\phi^*g_j\) we get a path in \(\mathfrak{R}_{\text{scal}>0}(M)\) connecting \(g_i\) and \(\phi^*g_j\).  Thus by \pref{prop:etainvsame}  \[\eta_{\alpha_i}(M_i,g_i)=\eta_\alpha(M,g_i)=\eta_\alpha(M,\phi^*g_j)=\eta_\alpha(M,g_j)=\eta_{\alpha_j}(M_j,g_j).\]
	
	Thus since \(\{\eta_{\alpha_i}(M_i,g_i)\}\) is infinite, the metrics \(g_i\) must represent infinitely many path components of \(\mathfrak{R}_{\text{sec}\geq0}(M)/H\).  The metrics of positive Ricci curvature obtained by applying the Ricci flow to \(g_i\) must represent infinitely many components of \(\mathfrak{R}_{\text{Ric}>0}(M)/H.\)
\end{proof}

The strategy in \cite{DGA21} applies to compute the relative \(\eta\) invariants of \(Q^5_0(d)\) (see also \cite{We21}). For $\epsilon>0$, the coboundary $W^6_\epsilon(d)$ comes with a canonical $spin^c$ structure induced by the complex manifold structure.  By \tref{thm:brieskvarprop}, \(H^2(W_\ep^6(d),\f{Z})=0\), so the principal \(\U(1)\)-bundle associated to the spin\(^c\) structure is isomorphic to the trivial bundle and admits a flat connection. $M^5_\epsilon(d)$ inherits a $spin^c$ structure as the boundary of $W^6_\epsilon(d)$. The action of $\tau$, which is holomorphic on \(W^6_\epsilon(d)\), preserves the $spin^c$ structure and induces a $spin^c$ structure on $Q^5_\epsilon(d)=M^5_\epsilon(d)/\tau$.  The principal \(\U(1)\) bundles associated to those spin\(^c\) structures inherit flat connections from the connection on \(W_\ep^6(d)\).   Let  \(\alpha:\pi_1(Q_\ep^5(d))\to \U(1)\) be the nontrivial unitary representation.  

\begin{lem}\label{Qeta}
    Let \(g\) be an \(SO(3)\) invariant metric of positive scalar curvature on \(Q^5_0(d)\).  Then \[\eta_\alpha(Q^5_0(d),g)=-\frac{d}{4}.\]
\end{lem}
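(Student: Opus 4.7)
The plan is to realize $Q^5_0(d)$ as the quotient, by a free isometric involution, of the boundary of a compact spin$^c$ 6-manifold on which the involution extends to a holomorphic involution with only isolated fixed points, and then apply Donnelly's equivariant index theorem together with the Lichnerowicz vanishing obtained from positive scalar curvature to reduce the relative $\eta$ invariant to a finite sum of local contributions.

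First I would fix a small $\epsilon > 0$ so that $M_\epsilon^5(d)\cong M_0^5(d)$ and $Q_\epsilon^5(d)\cong Q_0^5(d)$, and transport the given metric $g$ to $Q_\epsilon^5(d)$. Lifting to $M_\epsilon^5(d)$ yields a $\tau$-invariant metric of positive scalar curvature, and I would then extend this to a $\tau$-invariant metric on $W_\epsilon^6(d)$ which is a product in a collar of the boundary and has $\text{scal}\geq 0$ everywhere. This extension is constructed as in \cite{DGA21}; in particular one can keep the metric $\tau$-invariant since $\tau$ is an isometry of the boundary. The $spin^c$ structure on $W_\epsilon^6(d)$ is the canonical one coming from its complex structure; since $H^2(W_\epsilon^6(d),\Z)=0$ by \tref{thm:brieskvarprop}, the associated principal $\U(1)$-bundle is trivial and admits a flat connection which can be arranged to be constant in the collar direction near the boundary.

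With these preparations the Lichnerowicz argument applies exactly as recalled in \sref{subsec:diffeomorphismclassification}: $\text{index}(D_W,\tau)=0$ and $h_\tau(D_M)=0$. Donnelly's equivariant index theorem then gives
\begin{equation*}
\eta_\alpha(Q_0^5(d),g)=-\eta_\tau(D_M)=-2\sum_{N\subset W_\epsilon^6(d)^\tau} a(N).
\end{equation*}
The fixed-point set of $\tau$ on $W_\epsilon^6(d)$ consists of the $d$ isolated points $p_j=(\lambda_j,0,0,0)$, where $\lambda_j$ ranges over the complex $d$-th roots of $\epsilon$, all lying in the interior.

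The remaining work is the computation of $a(p_j)$. Because $\tau$ is holomorphic and the $spin^c$ structure on $W_\epsilon^6(d)$ is the canonical one of a complex manifold, the $spin^c$ Dirac operator is (up to factor) the Dolbeault operator $\sqrt{2}(\bar\partial+\bar\partial^*)$ acting on $\Omega^{0,*}$, and Donnelly's local contribution at an isolated fixed point reduces to the holomorphic Lefschetz contribution
\begin{equation*}
a(p_j)=\frac{1}{\det_{\C}\!\bigl(1-d\tau_{p_j}|_{T^{1,0}_{p_j}}\bigr)}.
\end{equation*}
At $p_j$ the tangent space to the variety $f_d^{-1}(\epsilon)$ equals the kernel of $df_d|_{p_j}$, which is the complex hyperplane $z_0=0$; hence $T^{1,0}_{p_j}W_\epsilon^6(d)\cong \C^3$ in coordinates $(z_1,z_2,z_3)$, on which $\tau$ acts as $-\Id$. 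Therefore $\det_\C(1-d\tau_{p_j})=2^3=8$ and $a(p_j)=1/8$. Summing over the $d$ fixed points yields
\begin{equation*}
\eta_\alpha(Q_0^5(d),g)=-2\cdot d\cdot \frac{1}{8}=-\frac{d}{4}.
\end{equation*}
The main obstacle is the construction of the $\tau$-invariant extension with the required curvature and product-boundary properties; the holomorphic Lefschetz identification of $a(p_j)$ is then a standard computation.
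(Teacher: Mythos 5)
Your proposal takes essentially the same route as the paper: pass to $\epsilon>0$, use $W_\epsilon^6(d)$ with its canonical complex spin$^c$ structure and flat associated $\U(1)$-bundle, invoke Lichnerowicz vanishing and Donnelly's equivariant index theorem, and compute the local contribution $a(p_j)=1/8$ at the $d$ isolated fixed points via the holomorphic Lefschetz formula. The one refinement worth noting is that the paper does not extend the transported metric directly, but instead uses Cheeger deformations together with \pref{prop:etainvsame} to replace $g$ by a path-equivalent metric $\phi^*g_\epsilon$ whose lift to $M_\epsilon^5(d)$ genuinely admits the required $\tau$-invariant extension of nonnegative scalar curvature to $W_\epsilon^6(d)$.
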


 \begin{proof}
  The lemma follows by the proof of claim 6.1 in \cite{DGA21}. We give a short summary.  
  
  Consider the non-trivial flat line bundle $E_\alpha = M^5_\epsilon(d)\times_{\alpha} \C$ over $Q^5_\epsilon(d)$.  One shows using Cheeger deformations, as described in \cite[Proposition 6.1.7]{We21}, \cite[Proposition 7.1.3]{We21} and \cite[\textsection 5 and pp.22-23]{DGA21}), that \(g\) lies in the same path component of \(\mathfrak{R}_{\text{scal}>0}(Q_0^5(d))\) as a metric \(\phi^*g_\ep,\) where \(\phi:Q_0^5(d)\to Q_\ep^5(d)\) is a diffeomorphism, and \(g_\ep\) is a metric of positive scalar curvature which lifts to a metric on \(M_\ep^5\) which in turn extends to a metric of nonnegtive scalar curvature on \(W_\ep^6(d).\)  If we pull back the \(spin^c\) structure and associated connection with \(\phi,\) then \(\eta_\alpha(Q_0^5(d),g)=\eta_\alpha(Q_\ep^5(d),g_\ep)\).  \(g_\ep\) satisfies the conditions such that the relative \(\eta\) invariant is given by \eqref{fixedpointformula}.  \(\tau\) has  fixed points \(p_1,...,p_d\in W_\ep^6(d),\) and the local contribution of each is \(a(p_i)=\ov{8}\) as  computed via the Dolbeault operator in\cite[Proposition 3.5]{DGA21} and \cite[Proposition 2.3.13]{We21}, yielding 
  
  $$\eta_\alpha(Q^5_\epsilon(d),g_\epsilon)=-2\sum_{j=1}^da({p_j})=-\frac{d}{4}.$$

  \end{proof}

It remains to compute the relative \(\eta\) invariant for \(X_{k,l}^5\) and \(\overline X_{k,l}^5.\)    Let \(E^6_{k,l}\to B^4\) be the complex line bundle  associated to the principal \(U(1)\) bundle \(N^5_{k,l}\to B^4\). Let \(W^6_{k,l}\subset E^6_{k,l}\) be the disc bundle, so \(\partial W_{k,l}=N_{k,l}\).  The tangent bundle of \(W_{k,l}\) is the pullback of \(TB\oplus E_{k,l}\).  Since \(w_2(TB)=c_{k,l}\) mod 2,  \(W_{k,l}\) is spin.  Since \(W_{k,l}\) is simply connected, the spin structure is unique.  The action of \(-1\in S^1\) on \(N_{k,l}\) extends to an involution \(\tau\) on \(W_{k,l}\) which acts as \(-1\) on each \(D^2\) fiber. Since the spin structure is unique, the derivative \(d\tau\) acting on the frame bunlde \(P_{\SO}\) of \(W_{k,l}\) lifts to an isomorphism \(\tilde d\tau\) of the Spin bundle \(P_\text{Spin}\).  Since the fixed point set of \(\tau\), which is the zero section of \(W_{k,l}\to B\), has codiminseion two, \(\tilde d\tau^2\) must be the fiberwise action of \(-1\) (see for instance \cite[Proposition 8.46]{AtBo68}).    

Let \(P_{U(1)}=W_{k,l}\times U(1)\) be the trivial principal \(U(1)\) bundle over \(W_{k,l}\) equipped with the trivial connection. Define an isomorphism 
\(\tau'\) of \(P_{U(1)}\), covering \(\tau\), by \(\tau'(x,z)=(\tau(x),-z)\).  Then \(P_{\text{Spin}^c}=P_\text{Spin}\times_{\f{Z}_2}P_{\U(1)}\to P_{\SO}\times P_{U(1)}\) is a \(spin^c\) structure for \(W_{k,l}\) with associated bundle \(P_{U(1)}\). Here \(\f{Z}_2\) acts on \(P_{\text{Spin}}\times P_{\U(1)}\) diagonally by the fiberwise action of \(\pm1\).  Furthermore, we can define an involution \(\tau''\) on \(P_{\text{Spin}^c}\):
\[\tau''([p,x,z])=[\tilde d\tau (p),\tau(x),iz]\] which covers the involution \(d\tau\times \tau'\) on \(P_{\SO}\times P_{U(1)}\).  One checks that \(\tau''^2=\)id, so \(\tau''\) generates a \(\f{Z}_2\) action on \(P_{\text{Spin}^c}\)  which covers the \(\f{Z}_2\) action on \(W_{k,l}\) generated by \(\tau\).  

\(P_{\text{Spin}^c}\) restricts to a \(spin^c\) structure on \(N_{k,l}\) on which \(\f{Z}_2\) acts freely, and therefore induces a \(spin^c\) structure on \(X_{k,l}.\)  Each \(spin^c\) structure is equipped with a flat connection on the associated complex line bundle.  Identically, the trivial connection on \(P_{U(1)}\) induces the same on the restriction of \(P_{\U(1)}\) to \(N_{k,l},\) which in turn induces a flat connection on the quotient by \(\f{Z}_2,\) which is the line bundle over \(X_{k,l}\) associated to the nontrivial representation \(\alpha:\pi_1(X_{k,l})=\f{Z}_2\to U(1).\)   Using the \(spin^c\) structure and connections defined in this way we have 

\begin{lem}\label{etavalues}
	Let \(g\) be an \(U(1)\) invariant metric of positive scalar curvature on \(X_{k,l}\). Let \(\overline g\) be an \(S^1\) invariant metric of positive scalar curvature on \(\overline X_{k,l}\).  Then 
	\[\eta_\alpha(X_{k,l},g)=\pm\ov{8}\left(l^2+2kl\right)\]
	\[\eta_\alpha(\overline X_{k,l},\overline{g})=\pm\ov{8}\left(2+l^2+2kl+2k^2\right)\]
\end{lem}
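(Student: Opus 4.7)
The plan is to run the same two-step template used for $Q^5_0(d)$ in Lemma \ref{Qeta}: first replace $g$ by a more convenient metric in the same component of $\mathfrak R_{\mathrm{scal}>0}$, then compute the resulting relative $\eta$ invariant via the fixed-point formula \eqref{fixedpointformula}. For the replacement step I would equip the disc bundle $W_{k,l} \to B$ with a $\tau$-invariant connection-type metric $g_W$ of nonnegative scalar curvature whose $S^1$-fibres shrink smoothly to the zero section $B$ and whose restriction to $N_{k,l} = \partial W_{k,l}$ has positive scalar curvature, and analogously for the disc bundle $\overline W_{k,l} \to \overline B$; a Cheeger-deformation plus short-time Ricci-flow argument as in \cite[\S\S6--7]{We21} and the proof of Lemma \ref{Qeta} then joins the given $U(1)$-invariant metric on $X_{k,l}$ to the quotient of $g_W|_{N_{k,l}}$ by $\Z_2$ through a path in $\mathfrak R_{\mathrm{scal}>0}$. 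By Proposition \ref{prop:etainvsame} it then suffices to evaluate $\eta_\alpha$ for the model metric.

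Applying Donnelly's fixed-point formula \eqref{fixedpointformula} to $(W_{k,l}, g_W)$, the involution $\tau$ acts as $-1$ on each disc fibre, so its fixed set is precisely the zero section $B$, a closed connected $4$-manifold whose normal bundle in $W_{k,l}$ is the complex line bundle $E_{k,l}$ with $c_1 = c_{k,l}$. There is therefore a single local contribution, and \eqref{fixedpointformula} reduces to $\eta_\alpha(X_{k,l},g) = -2a(B)$; the analogous statement holds over $\overline B$.

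The contribution $a(B)$ is the integral over $B$ of the Atiyah--Segal--Singer integrand associated to the spin$^c$ Dirac operator $D^c_W$ and the lift $\tau''$ constructed above. Because the associated $U(1)$-bundle is trivial, $D^c_W$ agrees with the spin Dirac operator of $W_{k,l}$, but with the nonstandard lift $\tilde d\tau\otimes i$; the resulting integrand has the schematic form
\[ \hat A(TB) \cdot \mathcal N(c_{k,l}), \qquad \mathcal N(x) = \pm\frac{1}{2\cosh(x/2)} = \pm\tfrac12 \mp \tfrac{x^2}{16} + O(x^4), \]
where the cosh comes from the rotation angle $\theta=\pi$ of $\tau$ on $E_{k,l}$ and the overall sign is determined by the choice of spin lift $\tilde d\tau$. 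In Case I we have $p_1(TB)=0$ and $\sigma(B)=0$, so $\hat A(TB)=1$ and only the degree-four $c_{k,l}^2$ term contributes; combined with $\langle c_{k,l}^2,[B]\rangle = -(l^2+2kl)$ from Lemma \ref{invariants}, this yields the stated value for $X_{k,l}$. In Case II the term $-p_1/24$ in $\hat A$, together with $p_1(T\overline B) = 6\overline u^2$, contributes $\pm\tfrac12 \langle -p_1/24,[\overline B]\rangle = \mp\tfrac18$ through the constant part of $\mathcal N$, which together with $\langle c_{k,l}^2,[\overline B]\rangle = l^2+2kl+2k^2$ produces the extra summand of $\pm\tfrac{2}{8}$ in the formula for $\overline X_{k,l}$.

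The main technical obstacle is the bookkeeping of signs and normalisations: the overall $\pm$ reflects the two possible spin lifts $\tilde d\tau$ of $d\tau$, and matching these to the conventions of Lemma \ref{invariants} and the identification $\Omega^{\mathrm{Pin}^+}_4\cong\Z_{16}$ requires care. I would pin this down by first verifying the precise form of $\mathcal N$ via the closed-manifold equivariant index theorem \cite[\S III.14]{LM89} applied to the double $W_{k,l}\cup_{N_{k,l}} W_{k,l}$, where no boundary correction is needed, and only then transferring the computation to the boundary setting of \cite{Do78}.
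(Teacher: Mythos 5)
Your proposal is correct and follows essentially the same route as the paper's proof: replace $g$ by the boundary restriction of a nonnegatively scalar-curved submersion metric on the disc bundle $W_{k,l}$, apply the fixed-point formula \eqref{fixedpointformula} over the zero section, and evaluate the local contribution as $\int_B \hat A(TB)\cdot \tfrac{1}{2\cosh(c_{k,l}/2)}$ (the $e^{\iota^*c/2}$ factor drops since the associated $U(1)$-bundle is flat). The only cosmetic difference is that for the connectedness step the paper simply shrinks the $U(1)$ fibres of $\tilde g$ rather than invoking the Ricci flow, which is reserved for Lemma~\ref{modulispaces}; your numerology for the $p_1$-term in Case II matches the paper's.
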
   
  
  \begin{proof}
  We give the argument for $X_{k,l}$, the reasoning being identical for $\overline{X}_{k,l}$. See also \cite[Proposition 7.1.5]{We21} for more details.
  
 The metric \(g\) lifts to an \(U(1)\)-invariant metric \(\tilde g\) on \(N_{k,l}\). Equip $N_{k,l}\times D^2$ with the product of $\tilde g$ and a \(U(1)\)-invariant metric of nonnegative curvature on \(D^2\) which is product like near the boundary. Give $W_{k,l}$ the metric which turns the projection $N_{k,l}\times D^2\rightarrow N_{k,l}\times_{S^1} D^2=W_{k,l}$ into a Riemannian submersion. This metric is of product form near the boundary, has $scal\geq 0$ everywhere and $scal>0$ on the boundary.  The metric induced on the boundary of \(W_{k,l}\)  can be obtained from \(\tilde g\) by shrinking the \(U(1)\) fibers, and induces a metric on \(X_{k,l}\) which lies in the same component of \(\mathfrak{R}_{\text{scal}>0}(X_{k,l})\) as \(g\).  
  
  Thus by \eqref{fixedpointformula}, the relative $\eta$ invariant is given by
  
  $$\eta_\alpha(X_{k,l},g)=-2a(B^4),$$ where $B^4$ represents the fixed point set of the action of $\tau$ on $W_{k,l}$, the zero section of the disk bundle \(W_{k,l}^6\to B^4\). The local contribution is given by\footnote{See for example \cite[Theorem 2.3.11]{We21}.}
  
  $$a(B^4)=\pm i \int_{B^4}e^{\iota^*(c)/2}\hat{A}_\pi(L_{k,l})\hat{A}(B^4),$$ where $\iota$ is the inclusion of the zero section into the disk bundle, $c$ is the canonical class of the $spin^c$ structure on $W_{k,l}$, $\hat{A}(B^4)$ is the usual Hirzebruch $\hat{A}$ genus and
  
  $$\hat{A}_\pi(L_{k,l})=\frac{1}{2i}\frac{1}{\cosh(c_{k,l}/2)}.$$ Recall that $c_{k,l}=-lu+kv$ by Lemma \ref{invariants}. Evaluation of the above integral amounts to determining the coefficient of $u^2$ in the above polynomial, using the relations $uv=-u^2$ and $v^2$ from Lemma \ref{invariants}. Note that \(c=0\) since the corresponding connection is flat. Thus, we obtain $a(B^4)=\pm \frac{1}{16}\left(-l^2-2kl\right)$ and the result  follows.

  \end{proof}

We can now prove \tref{thm}.  Let \(M\) satisfy the hypotheses of \tref{thm}.  If \(\pi_1(M)\) acts nontrivialy on \(\pi_2(M)\), then \(M\) is diffeomorphic to \(Q_0^5(d+16k)\) for all \(k\in\f{N}_0\) by \lref{Qdiff}.  As discussed at the end of \sref{manifolds}, \(Q_0^5(d+16k)\) admits a \(U(1)\times SO(3)\) invariant metric \(g_k\) of nonnegative sectional and positive scalar curvature to which we can apply \lref{Qeta}.  Since \(\eta_\alpha(Q_0^5(d+16k),g_k)=-\frac{d}{4}-4k\) takes infinitely many values, \(\modse{sec}(M)\) and \(\mods{Ric}(M)\) have infinitely many components by \lref{modulispaces}.   If \(\pi_1(M)\) acts trivially on \(\pi_2(M)\), by \lref{infinitelists}, \(M\) is diffeomorphic to either \(X_{k,2}^5\) or \(\overline X_{k,4}^5\) for infinitely many values of \(k.\)  In either case, each manifold admits a \(U(1)\) invariant metric of nonnegative sectional and positive scalar curvature inherited from \(S^3\times S^3\).  By \lref{etavalues}, the relative $\eta$ invariant, computed for those metrics, is a nontrivial polynomial in \(k\) and again takes infinitely many values.  \tref{thm} then follows from \lref{modulispaces}.

\printbibliography

@article{ADF96,
    AUTHOR = {Alekseevsky, D. and Dotti, Isabel and Ferraris, C.},
     TITLE = {Homogeneous {R}icci positive {$5$}-manifolds},
   JOURNAL = {Pacific J. Math.},
  FJOURNAL = {Pacific Journal of Mathematics},
    VOLUME = {175},
      YEAR = {1996},
     PAGES = {1--12},
     
}

@book{AlBe15,author ={Alexandrino, M. M. and Bettiol, R. G.},
title ={Lie groups and geometric aspects of isometric actions},publisher ={Springer},year ={2015}}

@article{AtBo68,author ={Atiyah, M. F. and Bott, R.},title ={A Lefschetz fixed point formula for elliptic complexes: II. Applications},journal ={Ann. of Math.},volume ={88},year ={1968},pages ={451--491}}

@article{ASIII68,author ={Atiyah, M. F. and Singer, I. M.},title ={The Index of Elliptic Operators: III},journal ={Annals of Mathematics,},volume ={87},number ={3},year ={1968},pages ={546--604}}

@article{APSI75,author ={Atiyah, M. F. and Patodi, V. K. and Singer, I. M.}, title={Spectral asymmetry and Riemannian geometry. I.}, journal={ Math. Proc. Cambridge Philos. Soc.}, volume={77}, year={1975}, pages= {43–-69}}

@article{APSII75,author ={Atiyah, M. F. and Patodi, V. K. and Singer, I. M.}, title={Spectral asymmetry and Riemannian geometry. II.}, journal={ Math. Proc. Cambridge Philos. Soc.}, volume={78}, year={1975}, pages= {405–-432}}

@article{BW07, author={B\"ohm, C. and Wilking, B.}, title={Nonnegatively curved manifolds with finite fundamental groups admit metrics with positive Ricci curvature}, journal={Geom. Funct. Anal.}, volume={17},pages={665-–681}, year={2007}}

@article{BH58, author={Borel, A. and Hirzebruch, F.}, title={Characteristic classes and homogeneous spaces, I}, journal={Amer. J. Math.}, volume={80}, pages={458--538}, year={1958}}

@book{BoGa07, author={Boyer, C. and Galicki, K.}, title={Sasakian geometry}, publisher={Oxford Science Publication}, year={2007}}

@book{Bre72, author={Bredon, G. E.}, title={Introduction to compact transformation groups}, series={Pure and Applied Mathematics}, volume={46}, publisher={Academic Press}, address={New York-London}, year={1972}}

@article{Bri66, author={Brieskorn, E.}, title={Beispiele zur Differentialtopologie von Singularitäten}, note={(German)}, journal={Invent. Math.}, volume={2}, pages={1--14}, year= {1966}}

@incollection{Bro67, author={Browder, W.}, title={Surgery and the Theory of Differentiable Transformation Groups}, editor={Mostert, P. S.}, booktitle={Proceedings of the Conference on Transformation Groups}, pages={1--46}, year={1967}, publisher={Springer-Verlag}, address={New Orleans}}

@unpublished{De17, author={Dessai, A.}, title={On the moduli space of nonnegatively curved metrics on Milnor spheres}, note={Preprint}, url={https://arxiv.org/abs/1712.08821}, year={2017}}

@article{DKT18, author={Dessai, A. and Klaus, S. and Tuschmann, W.}, title={Nonconnected moduli spaces of nonnegative sectional curvature metrics on simply connected manifolds}, journal={Bull. Lond. Math. Soc.}, volume={50}, number={1}, pages={96--107}, year={2018}}

@article{DGA21, author={Dessai, A. and Gonz\'{a}lez-\'{A}lvaro, D.}, title={Moduli space of metrics of nonnegative sectional or positive Ricci curvature on homotopy real projective spaces}, journal={Trans. Amer. Math. Soc.}, volume={374}, pages={1--33}, year={2021}}

@unpublished{De20, author={Dessai, A.}, title={Moduli space of nonnegatively curved metrics on manifolds of dimension 4k+1}, note={Preprint}, year={2020}, url={https://arxiv.org/abs/2005.04741}}

@article{DV14, author={DeVito, J.}, title={The classification of compact simply connected biquotients in dimensions 4 and 5}, journal={Differential Geometry and its Applications}, volume={34}, pages={128--138}, year={2014}}

@article{Do78, author={Donnelly, H.}, title={Eta invariants for $G$-spaces}, journal={Indiana Univ. Math. J.}, volume={27}, number={6}, pages={889--918}, year={1978}}

@article {GGS11,
    AUTHOR = {Galaz-Garcia, Fernando and Searle, Catherine},
     TITLE = {Low-dimensional manifolds with non-negative curvature and
              maximal symmetry rank},
   JOURNAL = {Proc. Amer. Math. Soc.},
  FJOURNAL = {Proceedings of the American Mathematical Society},
    VOLUME = {139},
      YEAR = {2011},
    NUMBER = {7},
     PAGES = {2559--2564}
}

@article{GT98, author={Geiges, H. and Thomas, C. B.}, title={Contact Topology and the Structure of 5-Manifolds with $\pi_1=\Z_2$}, journal={Ann. Inst. Fourier, Grenoble}, volume={48}, number={4}, pages={1167--1188}, year={1998}}

@article{Goo20a, author={Goodman, M. J.}, title={On the moduli spaces of metrics with nonnegative sectional curvature}, journal={Annals of Global Analysis and Geometry}, volume={57}, pages={305--320}, year={2020}}

@unpublished{Goo20b, author={Goodman, M. J.}, title={Moduli spaces of Ricci positive metrics in dimension five}, note={Preprint}, url={https://arxiv.org/abs/2002.00333}, year={2020}}

@article{GVWZ06, author={Grove, K. and Verdiani, L. and Wilking, B. and Ziller, W.}, title={Non-negative curvature obstructions in cohomogeneity one and the Kervaire spheres}, journal={Ann. Sc. Norm. Super. Pisa Cl. Sci. (5)}, volume={5}, number={2}, pages={159--170}, year={2006}}

@article{GZ00, author={Grove, K. and Ziller, W}, title={Curvature and symmetry of Milnor spheres}, journal={Ann. of Math. (2)}, volume={152}, number={1}, pages={331--367}, year={2000}}

@article {H10,
    AUTHOR = {Hoelscher, Corey A.},
     TITLE = {Classification of cohomogeneity one manifolds in low
              dimensions},
   JOURNAL = {Pacific J. Math.},
  FJOURNAL = {Pacific Journal of Mathematics},
    VOLUME = {246},
      YEAR = {2010},
     PAGES = {129--185},
   
}

@book{HM68, author={Hirzebruch, F. and Mayer, K. H.}, title={$O(n)$-Mannigfaltigkeiten, exotische Sphären und Singularitäten}, note={(German)}, series={Lecture Notes in Mathematics}, volume={57}, publisher={Springer}, address={Berlin-Heidelberg-New York}, year={1968}}

@inbook{KT90, place={Cambridge}, series={London Mathematical Society Lecture Note Series}, title={Pin structures on low-dimensional manifolds}, volume={151}, booktitle={Geometry of Low-Dimensional Manifolds, 2 (Durham, 1989)}, publisher={Cambridge University Press}, author={Kirby, R.C. and Taylor, L.R.}, year={1990}, pages={177-–242}, collection={London Mathematical Society Lecture Note Series}}

@book{LM89, author={Lawson, H. B. and Michelsohn, M.-L.}, title={Spin geometry}, series={Princeton Mathematical Series}, volume={38}, publisher={Princeton University Press}, address={Princeton, NJ.}, year={1989}}

@article{Li63, author={Lichnerowicz, A.}, title={Spineurs harmoniques}, note={(French)}, journal={C. R. Acad. Sci. Paris}, volume={257}, pages={7--9}, year={1963}}

@book{LdM71, author={L\'{o}pez de Medrano, S.}, title={Involutions on manifolds}, series={Ergebnisse der Mathematik und ihrer Grenzgebiete}, volume={59}, publisher={Springer-Verlag}, address={New York-Heidelberg}, year={1971}}

@article {Si16,
    AUTHOR = {Simas, Fabio},
     TITLE = {Nonnegatively curved five-manifolds with non-abelian symmetry},
   JOURNAL = {Geom. Dedicata},
  FJOURNAL = {Geometriae Dedicata},
    VOLUME = {181},
      YEAR = {2016},
     PAGES = {61--82},
    
}

@article{Sm62, author={Smale, S.}, title={On the structure of 5-manifolds}, journal={Ann. of Math.}, volume={75}, number={1}, pages={38--46}, year={1962}}

@article{Su12, author={Su, Y.}, title={Free involutions on $S^2\times S^3$}, journal={Geom. Dedicata}, volume={159}, pages={11--28}, year={2012}}

@article{To02, author={Totaro, B.}, title={Cheeger manifolds and the classification of biquotients}, journal={J. Diff. Geo.}, volume={61}, pages={397--451}, year={2002}}

@phdthesis{We21, author={Wermelinger, J.}, title={Moduli Spaces of Riemannian Metrics of Positive Ricci and Non-Negative Sectional Curvature on 5, 7 and 15-dimensional Manifolds }, address={University of Fribourg}, year={2021}}

@article{Wr11, author={Wraith, D. J.}, title={On the moduli space of positive Ricci curvature metrics on homotopy spheres}, journal={Geom. Top.}, volume={15}, pages={1983--2015}, year={2011}}


\end{document}